\newtheorem{lemma}{Lemma}
\newtheorem{theorem}{Theorem}[section]
\newtheorem{proposition}{Proposition}
\newtheorem{corollary}{Corollary}
\newtheorem{definition}{Definition}
\newtheorem{remark}{Remark}
\newtheorem{example}{Example}
\begin{document}

\title{CONSTRUCTIONS OF $k$-g-FUSION FRAMES AND THEIR DUALS IN HILBERT SPACES}

\author[V. Sadri]{Vahid Sadri}
\address{Institute of Fundamental Sciences\\ University of Tabriz\\ Iran\\ Tel. 00989144112669.}
\email{vahidsadri57@gmail.com}

\author[R. Ahmadi]{Reza Ahmadi}
\address{Institute of Fundamental Sciences\\ University of Tabriz\\ Iran\\}
\email{rahmadi@tabrizu.ac.ir}

\author[A. Rahimi]{Asghar Rahimi}
\address{Department of Mathematics\\ University of Maragheh\\ Iran\\}
\email{rahimi@maragheh.ac.ir}

\begin{abstract}
Frames for operators or $k$-frames were recently considered by
G$\breve{\mbox{a}}$vruta (2012) in connection with atomic systems.
Also generalized frames are important frames in the Hilbert space of
bounded linear operators. Fusion frames, which are a special case of
generalized frames have various applications. This paper introduces
the concept of generalized fusion frames for operators aka
$k$-g-fusion frames and we get some results for characterization of
these frames. We further discuss on duals and Q-duals in connection
with $k$-g-fusion frames. Also we obtain some useful identities for
these frames. We also give several methods to construct $k$-g-fusion
frames. The results of this paper can be used in sampling theory
which are developed by g-frames and especially fusion frames. In the
end, we discuss the stability of a more general perturbation for
$k$-g-fusion frames.
\end{abstract}

\subjclass[2010]{Primary 42C15; Secondary 46C99, 41A58}

\keywords{Fusion frame, g-fusion frame, Dual  g-fusion frame, $k$-g-fusion, $Q$-dual $k$-g-fusion frame.}

\maketitle

\section{Introduction}
Frames in Hilbert spaces were first proposed by Duffin and Schaeffer
in the context of non-harmonic Fourier series \cite{ds}. Now, frames
have been widely applied in signal processing, sampling, filter bank
theory, system modeling, Quantum information, cryptography, etc
(\cite{bhf}, \cite{eldar}, \cite{fer}, \cite{st} and \cite{rens}).
We can say that fusion frames are the generalization of conventional
classical frames and special cases of g-frames in the field of frame
theory. The fusion frames are in fact more susceptible due to
complicated relations between the structure of the sequence of
weighted subspaces and the local frames in the subspaces and due to
the extreme sensitivity with respect to changes of the weights.

 Frames for operators or $k$-frames have been introduced by G$\breve{\mbox{a}}$vruta in \cite{ga} to study the nature of atomic systems for a separable Hilbert space  with respect to a bounded linear operator $k$. It is a well-known fact that $k$-frames are more general than the classical frames and due to higher generality of $k$-frames, many properties of frames may not hold for $k$-frames. Recently, we  presented g-fusion frames in \cite{sad}. This paper presents $k$-g-fusion frames with respect to a bounded linear operator on a separable Hilbert space which are a generalization of g-fusion frames.

Throughout this paper, $H$ and $K$ are separable Hilbert spaces and $\mathcal{B}(H,K)$ is the collection of all bounded linear operators of $H$ into $K$. If $K=H$, then $\mathcal{B}(H,H)$ will be denoted by $\mathcal{B}(H)$. Also, $\pi_{V}$ is the orthogonal projection from $H$ onto a closed subspace $V\subset H$ and  $\lbrace H_j\rbrace_{j\in\Bbb J}$ is a sequence of Hilbert spaces where $\Bbb J$ is a subset of $\Bbb Z$.

For the proof of the following lemma, refer to \cite{ga}.
\begin{lemma}\label{l1}
Let $V\subseteq H$ be a closed subspace, and $T$ be a linear  bounded operator on $H$. Then
$$\pi_{V}T^*=\pi_{V}T^* \pi_{\overline{TV}}.$$
If $T$ is a unitary (i.e. $T^*T=Id_{H}$), then
$$\pi_{\overline{Tv}}T=T\pi_{v}.$$
\end{lemma}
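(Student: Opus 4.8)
The plan is to verify each identity by testing it against an arbitrary vector $x\in H$ and exploiting the orthogonal decomposition of $x$ relative to the relevant closed subspace; in both cases one piece of the decomposition will be mapped into the orthogonal complement of the target subspace and hence killed by the projection.

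For the first identity, I would fix $x\in H$ and write $x=\pi_{\overline{TV}}x+y$ with $y=x-\pi_{\overline{TV}}x\in(\overline{TV})^{\perp}$. Applying $T^{*}$ gives $T^{*}x=T^{*}\pi_{\overline{TV}}x+T^{*}y$, so after applying $\pi_{V}$ it suffices to show $\pi_{V}T^{*}y=0$, i.e. $T^{*}y\in V^{\perp}$. This follows from the adjoint relation: for every $v\in V$ we have $\langle T^{*}y,v\rangle=\langle y,Tv\rangle=0$, since $Tv\in TV\subseteq\overline{TV}$ while $y\perp\overline{TV}$. Hence $\pi_{V}T^{*}x=\pi_{V}T^{*}\pi_{\overline{TV}}x$ for all $x\in H$, which is the claim.

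For the second identity, assume $T^{*}T=Id_{H}$, so that $\langle Tu,Tw\rangle=\langle u,w\rangle$ for all $u,w\in H$. Fix $x\in H$ and decompose $x=\pi_{V}x+z$ with $z=x-\pi_{V}x\in V^{\perp}$. Then $Tx=T\pi_{V}x+Tz$, where $T\pi_{V}x\in TV\subseteq\overline{TV}$, so $\pi_{\overline{TV}}$ fixes it. I would then check $Tz\perp\overline{TV}$: for $v\in V$, $\langle Tz,Tv\rangle=\langle z,T^{*}Tv\rangle=\langle z,v\rangle=0$, so $Tz\perp TV$ and, by continuity of the inner product, $Tz\perp\overline{TV}$. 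Applying $\pi_{\overline{TV}}$ to $Tx$ therefore annihilates $Tz$ and preserves $T\pi_{V}x$, giving $\pi_{\overline{TV}}Tx=T\pi_{V}x$.

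The orthogonal decompositions and the adjoint manipulations are routine; the only point that needs a little care is that $TV$ need not be closed, so in both parts one must pass from orthogonality against $TV$ to orthogonality against its closure $\overline{TV}$ — which is immediate from continuity of $\langle\cdot,\cdot\rangle$. I do not anticipate any substantial obstacle beyond keeping this distinction straight.
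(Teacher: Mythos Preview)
Your argument is correct in both parts: the orthogonal decompositions are exactly the right tool, the adjoint computations are valid, and you handle the passage from $TV$ to $\overline{TV}$ properly. The only cosmetic remark is that what the paper calls ``unitary'' (namely $T^{*}T=Id_{H}$) is really just an isometry, but that is all you use, so nothing is affected.

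As for comparison with the paper: there is nothing to compare, since the paper does not prove this lemma at all --- it simply cites G\u{a}vru\c{t}a's article on duality of fusion frames for the proof. Your self-contained verification is therefore a genuine addition rather than a reproduction of anything in the text.
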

\begin{definition}\textbf{($k$-frame)}.
Let $\{f_j\}_{j\in\Bbb J}$ be a sequence of members of $H$ and
$k\in\mathcal{B}(H)$. We say that $\{f_j\}_{j\in\Bbb J}$ is a
$k$-frame  for $H$ if there exist $0<A\leq B<\infty$ such that for
each $f\in H$,
\begin{eqnarray*}
A\Vert k^*f\Vert^2\leq\sum_{j\in\Bbb J}\vert\langle f,f_j\rangle\vert^2\leq B\Vert f\Vert^2.
\end{eqnarray*}
\end{definition}
\begin{definition}(\textbf{g-fusion frame}).
Let $W=\lbrace W_j\rbrace_{j\in\Bbb J}$ be a collection of closed
subspaces of $H$, $\lbrace v_j\rbrace_{j\in\Bbb J}$ be a family of
weights, i.e. $v_j>0$  and $\Lambda_j\in\mathcal{B}(H,H_j)$ for each
$j\in\Bbb J$. We say $\Lambda:=(W_j, \Lambda_j, v_j)$ is a
generalized fusion frame (or g-fusion frame) for $H$ if there exist
$0<A\leq B<\infty$ such that for each $f\in H$,
\begin{eqnarray}\label{g}
A\Vert f\Vert^2\leq\sum_{j\in\Bbb J}v_j^2\Vert \Lambda_j \pi_{W_j}f\Vert^2\leq B\Vert f\Vert^2.
\end{eqnarray}
\end{definition}
If an operator $ u$ has closed range, then there exists a
right-inverse operator $u^ \dagger$ (pseudo-inverse of $u$) in the
following sences (see \cite{ch}).
\begin{lemma}\label{l3}
Let $u\in\mathcal{B}(K,H)$  be a bounded operator with closed range $\mathcal{R}_{u}$. Then there exists a bounded operator $u^\dagger \in\mathcal{B}(H,K)$ for which
$$uu^{\dagger} x=x, \ \ x\in \mathcal{R}_{u}.$$
\end{lemma}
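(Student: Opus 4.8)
The plan is to construct $u^{\dagger}$ explicitly by inverting $u$ on the orthogonal complement of its kernel and then composing with the orthogonal projection onto $\mathcal{R}_{u}$. First I would set $\mathcal{N}_{u}=\ker u$, which is a closed subspace because $u$ is bounded, and use the orthogonal decomposition $K=\mathcal{N}_{u}\oplus\mathcal{N}_{u}^{\perp}$. Restricting $u$ to $\mathcal{N}_{u}^{\perp}$ yields a bounded operator $u_{0}:=u|_{\mathcal{N}_{u}^{\perp}}\colon\mathcal{N}_{u}^{\perp}\to\mathcal{R}_{u}$ which is injective, since its kernel is $\mathcal{N}_{u}\cap\mathcal{N}_{u}^{\perp}=\{0\}$, and surjective onto $\mathcal{R}_{u}$, since every vector $ux$ equals $u\,\pi_{\mathcal{N}_{u}^{\perp}}x$.

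The key step is to observe that $\mathcal{R}_{u}$, being a closed subspace of the Hilbert space $H$, is itself a Hilbert space; this is exactly where the closed-range hypothesis is used. Thus $u_{0}$ is a bounded bijection between two Hilbert spaces, and by the bounded inverse theorem (a consequence of the open mapping theorem) its inverse $u_{0}^{-1}\colon\mathcal{R}_{u}\to\mathcal{N}_{u}^{\perp}$ is again a bounded operator.

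Finally I would define $u^{\dagger}\colon H\to K$ by $u^{\dagger}:=u_{0}^{-1}\,\pi_{\mathcal{R}_{u}}$, where $\pi_{\mathcal{R}_{u}}$ is the orthogonal projection of $H$ onto the closed subspace $\mathcal{R}_{u}$; then $u^{\dagger}\in\mathcal{B}(H,K)$ as a composition of bounded operators. For $x\in\mathcal{R}_{u}$ one has $\pi_{\mathcal{R}_{u}}x=x$, hence $u^{\dagger}x=u_{0}^{-1}x\in\mathcal{N}_{u}^{\perp}$ and therefore $uu^{\dagger}x=u_{0}(u_{0}^{-1}x)=x$, which is the asserted identity.

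I expect the only genuine point requiring care to be the invocation of the open mapping theorem: one must check that $\mathcal{R}_{u}$ is complete (which follows from its being closed in $H$) and that $u_{0}$ maps \emph{onto} all of $\mathcal{R}_{u}$, not merely onto a dense subspace; once these are noted the remaining verifications are routine. One may additionally record the properties $u^{\dagger}u=\pi_{\mathcal{N}_{u}^{\perp}}$ and $\mathcal{R}_{u^{\dagger}}=\mathcal{N}_{u}^{\perp}$, which are frequently what is used in later applications, though neither is needed for the statement as given.
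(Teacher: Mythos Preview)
Your construction is correct and is the standard Moore--Penrose argument: restrict $u$ to $\mathcal{N}_u^\perp$ to obtain a bijection onto the closed subspace $\mathcal{R}_u$, invoke the bounded inverse theorem, and precompose with $\pi_{\mathcal{R}_u}$. All the verifications you list are accurate.

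Note, however, that the paper does not supply its own proof of this lemma at all; it merely states the result and refers the reader to Christensen's book~\cite{ch}. So there is no ``paper's proof'' to compare against. Your argument is exactly the one that appears in that reference (and in essentially every treatment of the pseudo-inverse on Hilbert spaces), so in effect you have reproduced the proof the paper is citing.
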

\begin{lemma}\label{Ru}
Let $u\in\mathcal{B}(K,H)$. Then the following assertions holds:
\begin{enumerate} \item
 $\mathcal{R}_u$ is closed in $H$ if and only if $\mathcal{R}_{u^{\ast}}$ is closed in $K$.
\item $(u^{\ast})^\dagger=(u^\dagger)^\ast$.
\item
The orthogonal projection of $H$ onto $\mathcal{R}_{u}$ is given by $uu^{\dagger}$.
\item
The orthogonal projection of $K$ onto $\mathcal{R}_{u^{\dagger}}$ is given by $u^{\dagger}u$.\item$\mathcal{N}_{{u}^{\dagger}}=\mathcal{R}^{\bot}_{u}$ and $\mathcal{R}_{u^{\dagger}}=\mathcal{N}^{\bot}_{u}$.
 \end{enumerate}
\end{lemma}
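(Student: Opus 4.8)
The plan is to reduce the whole lemma to the concrete description of the pseudo-inverse that underlies Lemma~\ref{l3}. Since $\mathcal{R}_u$ is closed, the map $u_0:=u|_{\mathcal{N}_u^{\bot}}\colon\mathcal{N}_u^{\bot}\to\mathcal{R}_u$ is a bounded linear bijection, hence has a bounded inverse by the open mapping theorem, and $u^{\dagger}$ is the bounded operator that acts as $u_0^{-1}$ on $\mathcal{R}_u$ and as $0$ on $\mathcal{R}_u^{\bot}$. First I would record this description together with the two algebraic identities $uu^{\dagger}u=u$ and $u^{\dagger}uu^{\dagger}=u^{\dagger}$, both of which drop out after splitting $H=\mathcal{R}_u\oplus\mathcal{R}_u^{\bot}$ and $K=\mathcal{N}_u^{\bot}\oplus\mathcal{N}_u$ and evaluating on each summand.

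Parts (3), (4), (5) then follow by a one-line computation on the two summands. For $x=x_1+x_2\in\mathcal{R}_u\oplus\mathcal{R}_u^{\bot}$ one gets $uu^{\dagger}x=u\,u_0^{-1}x_1=x_1$, so $uu^{\dagger}=\pi_{\mathcal{R}_u}$; in particular $uu^{\dagger}$ is self-adjoint. For $y=y_1+y_2\in\mathcal{N}_u^{\bot}\oplus\mathcal{N}_u$ one gets $u^{\dagger}uy=u_0^{-1}u_0y_1=y_1$, so $u^{\dagger}u=\pi_{\mathcal{N}_u^{\bot}}$, again self-adjoint. The equality $\mathcal{N}_{u^{\dagger}}=\mathcal{R}_u^{\bot}$ is built into the definition of $u^{\dagger}$, and $\mathcal{R}_{u^{\dagger}}=\mathcal{R}_{u_0^{-1}}=\mathcal{N}_u^{\bot}$ because $u_0^{-1}$ maps $\mathcal{R}_u$ onto $\mathcal{N}_u^{\bot}$; combining this with $u^{\dagger}u=\pi_{\mathcal{N}_u^{\bot}}$ yields $u^{\dagger}u=\pi_{\mathcal{R}_{u^{\dagger}}}$, which is (4), and completes (5).

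For (1) I would use the self-adjointness of $u^{\dagger}u$ established above. Since $u^{\dagger}u=\pi_{\mathcal{N}_u^{\bot}}=(u^{\dagger}u)^{*}=u^{*}(u^{\dagger})^{*}$, every $k\in\mathcal{N}_u^{\bot}$ satisfies $k=\pi_{\mathcal{N}_u^{\bot}}k=u^{*}\big((u^{\dagger})^{*}k\big)\in\mathcal{R}_{u^{*}}$; as the inclusion $\mathcal{R}_{u^{*}}\subseteq\overline{\mathcal{R}_{u^{*}}}=\mathcal{N}_u^{\bot}$ always holds, this forces $\mathcal{R}_{u^{*}}=\mathcal{N}_u^{\bot}$, which is closed. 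The reverse implication is obtained by applying what was just proved to $u^{*}$ in place of $u$ together with $(u^{*})^{*}=u$.

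For (2) I would check that $(u^{\dagger})^{*}$ satisfies the four Moore--Penrose relations relative to $u^{*}$: the equations $u^{*}(u^{\dagger})^{*}u^{*}=u^{*}$ and $(u^{\dagger})^{*}u^{*}(u^{\dagger})^{*}=(u^{\dagger})^{*}$ are the adjoints of $uu^{\dagger}u=u$ and $u^{\dagger}uu^{\dagger}=u^{\dagger}$, while the self-adjointness of $u^{*}(u^{\dagger})^{*}=(u^{\dagger}u)^{*}$ and of $(u^{\dagger})^{*}u^{*}=(uu^{\dagger})^{*}$ follows from the self-adjointness of $u^{\dagger}u$ and $uu^{\dagger}$; since these four relations characterize the pseudo-inverse uniquely, $(u^{\dagger})^{*}=(u^{*})^{\dagger}$. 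I do not expect a genuinely hard step here; the only points that need care are (a) fixing that "pseudo-inverse" is taken as the Moore--Penrose inverse with its orthogonality/self-adjointness features — for a generic right inverse parts (2)--(5) fail — and (b) proving (1) without circularity, i.e. deriving closedness of $\mathcal{R}_{u^{*}}$ from the self-adjoint projection $u^{\dagger}u$ rather than from $(u^{*})^{\dagger}$, whose construction would itself presuppose that closedness.
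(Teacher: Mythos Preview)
Your argument is correct and carefully handles the one subtle point, namely proving (1) without presupposing the existence of $(u^{*})^{\dagger}$. The paper, however, does not give any proof of this lemma: it is stated as a standard fact about the Moore--Penrose inverse (in the same spirit as Lemma~\ref{l3}, which is cited from \cite{ch}) and used later without justification. So there is nothing to compare against; you have supplied a clean self-contained proof where the paper simply quotes the result.
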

\begin{lemma}(\cite{dag}).\label{dag}
Let $L_1\in\mathcal{B}(H_1,H)$ and $L_2\in\mathcal{B}(H_2,H)$ be  on given Hilbert spaces. Then the following assertions are equivalent:
\begin{enumerate}\item
$\mathcal{R}(L_1)\subseteq\mathcal{R}(L_2)$;\item
$L_1L_1^*\leq\lambda^2 L_2L_2^*$ for some$\lambda>0$;\item
there exists a  mapping $u\in\mathcal{B}(H,H_2)$ such that $L_1=L_2 u$.
\end{enumerate}

Moreover, if those condition are valid, then there exists a unique operator $u$ such that
\begin{enumerate}
\item[(a)] $\Vert u\Vert^2=\inf\{\alpha>0 \ \vert \ L_1L_1^*\leq\alpha L_2L_2^*\};$
\item[(b)] $\ker{L_1}=\ker{u};$
\item[(c)] $\mathcal{R}(u)\subseteq\overline{\mathcal{R}(L_2^*)}.$
\end{enumerate}
\end{lemma}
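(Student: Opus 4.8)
The plan is to establish the cycle of implications $(3)\Rightarrow(1)$, $(3)\Rightarrow(2)$, $(2)\Rightarrow(3)$ and $(1)\Rightarrow(3)$, which together give the equivalence of the three conditions; the bounded operator constructed in the proof of $(2)\Rightarrow(3)$ is then shown to be the one satisfying (a)--(c). This is the classical range-inclusion theorem of Douglas, and the argument follows his line.

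Two of the implications are essentially free. If $L_1=L_2u$ for a bounded $u$, then $\mathcal{R}(L_1)=L_2\big(\mathcal{R}(u)\big)\subseteq\mathcal{R}(L_2)$, which is $(3)\Rightarrow(1)$; and from $uu^*\le\|u\|^2\,\mathrm{Id}$ we get $L_1L_1^*=L_2uu^*L_2^*\le\|u\|^2\,L_2L_2^*$, which is $(3)\Rightarrow(2)$ with $\lambda=\|u\|$. For $(1)\Rightarrow(3)$ I would define $u$ pointwise: for $x\in H_1$ the solution set $\{y\in H_2:L_2y=L_1x\}$ is a nonempty closed affine subset of $H_2$, hence meets $\ker(L_2)^\perp=\overline{\mathcal{R}(L_2^*)}$ in exactly one point, which we call $ux$. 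Uniqueness of this representative forces $u$ to be linear, and the closed graph theorem forces it to be bounded: $x_n\to x$ and $ux_n\to z$ imply $L_2z=\lim L_2(ux_n)=\lim L_1x_n=L_1x$ while $z\in\overline{\mathcal{R}(L_2^*)}$, so $z=ux$.

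The step that needs genuine care is $(2)\Rightarrow(3)$. Assuming $L_1L_1^*\le\lambda^2L_2L_2^*$, evaluating both quadratic forms at a vector $h\in H$ gives $\|L_1^*h\|\le\lambda\|L_2^*h\|$. Hence the assignment $L_2^*h\mapsto L_1^*h$ is a well-defined bounded linear map of norm $\le\lambda$ from $\mathcal{R}(L_2^*)$ into $H_1$ — well-defined precisely because $L_2^*h=L_2^*h'$ implies $L_1^*h=L_1^*h'$ by that inequality. I would extend it by continuity to $\overline{\mathcal{R}(L_2^*)}$ and by $0$ on $\mathcal{R}(L_2^*)^\perp=\ker L_2$, obtaining $v$ with $vL_2^*=L_1^*$ and $\|v\|\le\lambda$; taking adjoints yields $L_2v^*=L_1$, so $u:=v^*$ is the desired factor. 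The only subtleties are the well-definedness just mentioned and keeping straight which Hilbert space each operator acts between.

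Finally, for the ``moreover'' part I would work with the $u$ just produced. Since $v$ was set to $0$ on $\mathcal{R}(L_2^*)^\perp$, we have $\ker v\supseteq\ker L_2$, hence $\mathcal{R}(u)\subseteq\overline{\mathcal{R}(v^*)}=(\ker v)^\perp\subseteq(\ker L_2)^\perp=\overline{\mathcal{R}(L_2^*)}$, which is (c); this inclusion in turn gives uniqueness, for if $L_2u'=L_1$ with $\mathcal{R}(u')\subseteq(\ker L_2)^\perp$ then $\mathcal{R}(u-u')\subseteq\ker L_2\cap(\ker L_2)^\perp=\{0\}$. For (b), $\ker u\subseteq\ker L_1$ is immediate from $L_1=L_2u$, and conversely $L_1x=0$ forces $ux\in\ker L_2\cap\mathcal{R}(u)=\{0\}$. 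For (a), the operator $v$ in the construction does not depend on $\lambda$ and satisfies $\|v\|\le\lambda$ for every admissible $\lambda$, so $\|u\|^2=\|v\|^2\le\inf\{\alpha>0:L_1L_1^*\le\alpha L_2L_2^*\}$, while $(3)\Rightarrow(2)$ applied to $u$ gives $L_1L_1^*\le\|u\|^2L_2L_2^*$, so that infimum is at most $\|u\|^2$, forcing equality. I expect $(2)\Rightarrow(3)$ to be the one place demanding attention; the rest is bookkeeping with ranges, kernels, and adjoints.
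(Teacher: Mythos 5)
Your proof is correct and is precisely the classical Douglas factorization argument: the easy implications $(3)\Rightarrow(1)$ and $(3)\Rightarrow(2)$, the pointwise selection of the solution in $(\ker L_2)^\perp$ plus the closed graph theorem for $(1)\Rightarrow(3)$, the well-defined contraction $L_2^*h\mapsto L_1^*h$ extended by continuity on $\overline{\mathcal{R}(L_2^*)}$ and by zero on its complement for $(2)\Rightarrow(3)$, and the range/kernel bookkeeping for (a)--(c) are all sound. The paper itself gives no proof of this lemma (it is quoted from the cited Douglas reference), so there is no in-paper argument to compare against; the only point worth flagging is that the printed hypothesis ``$u\in\mathcal{B}(H,H_2)$'' should read $u\in\mathcal{B}(H_1,H_2)$ for the composition $L_1=L_2u$ to make sense, a typo your construction implicitly corrects.
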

\section{$k$-g- Fusion Frames}
 We define the space $\mathscr{H}_2:=(\sum_{j\in\Bbb J}\oplus H_j)_{\ell_2}$ by
\begin{eqnarray}
\mathscr{H}_2=\big\lbrace \lbrace f_j\rbrace_{j\in\Bbb J} \ : \
f_j\in H_j , \ \sum_{j\in\Bbb J}\Vert f_j\Vert^2<\infty\big\rbrace
\end{eqnarray}
with the inner product defined by
$$\langle \lbrace f_j\rbrace, \lbrace g_j\rbrace\rangle=\sum_{j\in\Bbb J}\langle f_j, g_j\rangle.$$
It is clear that $\mathscr{H}_2$ is a Hilbert space with pointwise operations.
\begin{definition}
Let $W=\lbrace W_j\rbrace_{j\in\Bbb J}$ be a collection of closed
subspaces of $H$, $\lbrace v_j\rbrace_{j\in\Bbb J}$ be a family of
weights, i.e. $v_j>0$, $\Lambda_j\in\mathcal{B}(H,H_j)$ for each
$j\in\Bbb J$ and $k\in\mathcal{B}(H)$. We say $\Lambda:=(W_j,\Lambda_j, v_j)$ is a $k$-g- fusion frame  for $H$ if there exist
$0<A\leq B<\infty$  such that for each $f\in H$,
\begin{eqnarray}\label{g}
A\Vert k^*f\Vert^2\leq\sum_{j\in\Bbb J}v_j^2\Vert \Lambda_j \pi_{W_j}f\Vert^2\leq B\Vert f\Vert^2.
\end{eqnarray}
\end{definition}
When $k=id_H$, we get the g-fusion frame for $H$. Throughout this paper, $\Lambda$ will be a triple $(W_j, \Lambda_j, v_j)$ with $j\in\Bbb J$ unless otherwise noted.
We say $\Lambda$ is a Parseval $k$-g-fusion frame whenever
$$\sum_{j\in\Bbb J}v_j^2\Vert \Lambda_j \pi_{W_j}f\Vert^2=\Vert k^*f\Vert^2.$$
The synthesis and the analysis  operators in the $k$-g-fusion frames are defined by
\begin{align*}
T_{\Lambda}&:\mathscr{H}_2\longrightarrow H \ \ \ \ \ \ \ \ \ , \ \ \ \ \ \ T_{\Lambda}^*:H\longrightarrow\mathscr{H}_2\\
T_{\Lambda}(\lbrace f_j\rbrace_{j\in\Bbb J})&=\sum_{j\in\Bbb J}v_j \pi_{W_j}\Lambda_{j}^{*}f_j \ \ \ , \ \ \ \ T_{\Lambda}^*(f)=\lbrace v_j \Lambda_j \pi_{W_j}f\rbrace_{j\in\Bbb J}.
\end{align*}
Thus, the $k$-g-fusion frame operator is given by
$$S_{\Lambda}f=T_{\Lambda}T^*_{\Lambda}f=\sum_{j\in\Bbb J}v_j^2 \pi_{W_j}\Lambda^*_j \Lambda_j \pi_{W_j}f$$
and
\begin{equation}\label{sf1}
\langle S_{\Lambda}f, f\rangle=\sum_{j\in\Bbb J}v_j^2\Vert \Lambda_j \pi_{W_j}f\Vert^2,
\end{equation}
for all $f\in H$. Therefore,
\begin{equation}\label{sf2}
\langle Akk^*f, f\rangle\leq \langle S_{\Lambda}f, f\rangle\leq \langle B f, f\rangle
\end{equation}
or
\begin{equation}\label{sf3}
Akk^*\leq  S_{\Lambda}\leq B I.
\end{equation}
Hence, we conclude that:
\begin{proposition}\label{pr3}
Let $\Lambda$ be a g-fusion Bessel sequence for $H$. Then $\Lambda$ is a $k$-g-fusion frame for $H$ if and only if there exists $A>0$ such that $S_{\Lambda}\geq Akk^*$.
\end{proposition}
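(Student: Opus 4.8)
The plan is to obtain the equivalence directly from the quadratic-form identity \eqref{sf1} together with the elementary fact that the order between positive self-adjoint operators is detected on the diagonal: $P\leq Q$ precisely when $\langle Pf,f\rangle\leq\langle Qf,f\rangle$ for all $f\in H$. Observe first that the g-fusion Bessel hypothesis is exactly the upper estimate $\sum_{j\in\Bbb J}v_j^2\Vert\Lambda_j\pi_{W_j}f\Vert^2\leq B\Vert f\Vert^2$, which already guarantees that $T_\Lambda^*$ (hence $T_\Lambda$ and $S_\Lambda=T_\Lambda T_\Lambda^*$) is bounded; thus $S_\Lambda$ is a well-defined positive self-adjoint operator and \eqref{sf1} is at our disposal.

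For the forward direction I would assume $\Lambda$ is a $k$-g-fusion frame with lower bound $A>0$ and compute, for each $f\in H$,
\[
\langle Akk^*f,f\rangle=A\Vert k^*f\Vert^2\leq\sum_{j\in\Bbb J}v_j^2\Vert\Lambda_j\pi_{W_j}f\Vert^2=\langle S_\Lambda f,f\rangle,
\]
so that $S_\Lambda\geq Akk^*$ after invoking the diagonal criterion. Conversely, assuming $S_\Lambda\geq Akk^*$ for some $A>0$, I would combine this with the Bessel bound $B$ and \eqref{sf1} to get
\[
A\Vert k^*f\Vert^2=\langle Akk^*f,f\rangle\leq\langle S_\Lambda f,f\rangle=\sum_{j\in\Bbb J}v_j^2\Vert\Lambda_j\pi_{W_j}f\Vert^2\leq B\Vert f\Vert^2
\]
for all $f\in H$, which is precisely the defining chain of inequalities for a $k$-g-fusion frame.

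I do not expect a real obstacle here, since the statement is essentially a repackaging of \eqref{sf2}--\eqref{sf3}. The only delicate points worth spelling out are that the g-fusion Bessel assumption is what makes the operator inequality meaningful in the first place (otherwise $S_\Lambda$ need not be bounded), and the identity $\langle Akk^*f,f\rangle=A\Vert k^*f\Vert^2$, which lets one pass freely between the scalar lower frame inequality and the operator inequality $Akk^*\leq S_\Lambda$.
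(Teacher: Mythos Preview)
Your proof is correct and follows exactly the paper's own reasoning: the proposition is stated immediately after \eqref{sf1}--\eqref{sf3} with the words ``Hence, we conclude that,'' so the authors likewise regard it as a direct repackaging of those identities via the quadratic-form characterization of the operator order. There is no difference in approach to discuss.
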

\begin{remark}\label{re1}
In $k$-g-fusion frames, like $k$-frames and $k$-fusion frames, the $k$-g-fusion operator is not invertible. But,
if $k\in\mathcal{B}(H)$ has closed range, then the operator $S_\Lambda$ is an invertible operator on a subspace of $\mathcal{R}(k)\subset H$.
Indeed, suppose that $f\in\mathcal{R}(k)$, then
$$\Vert f\Vert^2=\Vert (k^\dagger\vert_{\mathcal{R}(k)})^*k^*f\Vert^2\leq\Vert k^\dagger\Vert^2 \Vert k^*f\Vert^2.$$
Thus, we have
$$A\Vert k^{\dagger}\Vert^{-2}\Vert f\Vert^2\leq\langle S_{\Lambda}f, f\rangle\leq B\Vert f\Vert^2,$$
which implies that $S_{\Lambda}:\mathcal{R}(k)\rightarrow S_{\Lambda}(\mathcal{R}(k))$ is a homeomorphism, furthermore, for each $f\in S_{\Lambda}(\mathcal{R}(k))$ we have
\begin{equation*}
B^{-1}\Vert f\Vert^2\leq\langle (S_{\Lambda}\vert_{\mathcal{R}(k)})^{-1}f, f\rangle\leq A^{-1}\Vert k^{\dagger}\Vert^2\Vert f\Vert^2.
\end{equation*}
\end{remark}
\begin{remark}\label{re2}
Since $S_{\Lambda}\in\mathcal{B}(H)$ is positive and self-adjoint and $\mathcal{B}(H)$ is a $C^*$-algabra, then $S^{-1}_{\Lambda}$ is positive and self-adjoint too whenever $k\in\mathcal{B}(H)$ has closed range. Now, for each $f\in S_{\Lambda}(\mathcal{R}(k))$ we can write
\begin{align*}
\langle kf, f\rangle&=\langle kf, S_{\Lambda}S_{\Lambda}^{-1}f\rangle\\
&=\langle S_{\Lambda}(kf), S_{\Lambda}^{-1}f\rangle\\
&=\langle\sum_{j\in\Bbb J}v_j^2 \pi_{W_j}\Lambda^*_j \Lambda_j \pi_{W_j}kf, S_{\Lambda}^{-1}f\rangle\\
&=\sum_{j\in\Bbb J}v_j^2 \langle S_{\Lambda}^{-1}\pi_{W_j}\Lambda^*_j \Lambda_j \pi_{W_j}kf, f\rangle.
\end{align*}
\end{remark}
\begin{theorem}
Let $u\in\mathcal{B}(H)$ be an invertible operator on $H$ and $\Lambda$ be a $k$-g-fusion frame for $H$ with bounds $A$ and $B$. Then, $\Gamma:=(uW_j, \Lambda_j \pi_{W_j}u^{*}, v_j)$ is a $uk$-g-fusion frame for $H$.
\end{theorem}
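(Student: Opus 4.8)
The plan is to reduce the frame inequalities for $\Gamma=(uW_j,\Lambda_j\pi_{W_j}u^{*},v_j)$ to the ones already available for $\Lambda$, by absorbing the operator $u^{*}$ past the projections with the help of Lemma \ref{l1}. First I would check that $\Gamma$ is an admissible triple: since $u$ is invertible, $u^{-1}\in\mathcal{B}(H)$ and $u$ is a homeomorphism of $H$, so each $uW_j$ is again a closed subspace of $H$ (hence $\overline{uW_j}=uW_j$ and $\pi_{uW_j}$ makes sense), each weight $v_j$ is unchanged, and each $\Lambda_j\pi_{W_j}u^{*}$ lies in $\mathcal{B}(H,H_j)$.

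The key algebraic step is to apply the first identity of Lemma \ref{l1} with $V=W_j$ and $T=u$, which gives
\[
\pi_{W_j}u^{*}=\pi_{W_j}u^{*}\pi_{\overline{uW_j}}=\pi_{W_j}u^{*}\pi_{uW_j}.
\]
Consequently, for every $f\in H$ and every $j\in\Bbb J$ we have $\Lambda_j\pi_{W_j}u^{*}\pi_{uW_j}f=\Lambda_j\pi_{W_j}(u^{*}f)$, and therefore
\[
\sum_{j\in\Bbb J}v_j^2\big\Vert\Lambda_j\pi_{W_j}u^{*}\pi_{uW_j}f\big\Vert^2=\sum_{j\in\Bbb J}v_j^2\big\Vert\Lambda_j\pi_{W_j}(u^{*}f)\big\Vert^2 .
\]

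Now I would feed the vector $u^{*}f$ into the defining inequalities of the $k$-g-fusion frame $\Lambda$, namely $A\Vert k^{*}g\Vert^2\le\sum_{j}v_j^2\Vert\Lambda_j\pi_{W_j}g\Vert^2\le B\Vert g\Vert^2$ for all $g\in H$, with $g=u^{*}f$. Using the factorization $k^{*}u^{*}=(uk)^{*}$, the left-hand estimate immediately yields $A\Vert(uk)^{*}f\Vert^2\le\sum_{j}v_j^2\Vert\Lambda_j\pi_{W_j}u^{*}\pi_{uW_j}f\Vert^2$, while the right-hand estimate gives $\sum_{j}v_j^2\Vert\Lambda_j\pi_{W_j}(u^{*}f)\Vert^2\le B\Vert u^{*}f\Vert^2\le B\Vert u\Vert^2\Vert f\Vert^2$. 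Hence $\Gamma$ is a $uk$-g-fusion frame for $H$ with bounds $A$ and $B\Vert u\Vert^2$.

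There is essentially no hard analytic obstacle here; the only point requiring care is the justification that each $uW_j$ is closed, so that the projections $\pi_{uW_j}$ are well defined and the identity $\pi_{\overline{uW_j}}=\pi_{uW_j}$ used above is legitimate — this is precisely where invertibility of $u$ enters (and note the upper bound alone does not need it). The lower bound, on the other hand, crucially relies on Lemma \ref{l1} together with the identity $(uk)^{*}=k^{*}u^{*}$, which is exactly what allows $u^{*}$ to be absorbed without losing the frame condition.
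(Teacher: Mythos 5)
Your proposal is correct and follows essentially the same route as the paper's own proof: apply Lemma \ref{l1} with $T=u$ to replace $\pi_{W_j}u^{*}\pi_{uW_j}$ by $\pi_{W_j}u^{*}$, then test the frame inequalities of $\Lambda$ at $u^{*}f$ and use $k^{*}u^{*}=(uk)^{*}$ to obtain the bounds $A$ and $B\Vert u\Vert^2$. Your added remarks on the closedness of $uW_j$ (needed so that $\pi_{\overline{uW_j}}=\pi_{uW_j}$) simply make explicit a point the paper leaves tacit.
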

\begin{proof}
Let $f\in H$. Then by applying Lemma \ref{l1}, with $u$ instead of $T$, we have
\begin{align*}
\sum_{j\in\Bbb J}v_j^2\Vert \Lambda_j \pi_{W_j}u^{*}\pi_{uW_j}f\Vert^2&=\sum_{j\in\Bbb J}v_j^2\Vert \Lambda_j \pi_{W_j}u^{*}f\Vert^2\\
&\leq B\Vert u^{*}f\Vert^2\\
&\leq B\Vert u\Vert^2\Vert f\Vert^2.
\end{align*}
So, $\Gamma$ is a g-fusion Bessel sequence for $H$. On the other hand,
\begin{align*}
\sum_{j\in\Bbb J}v_j^2\Vert \Lambda_j \pi_{W_j}u^{*}\pi_{uW_j}f\Vert^2&=\sum_{j\in\Bbb J}v_j^2\Vert \Lambda_j \pi_{W_j}u^{*}f\Vert^2\\
&\geq A\Vert k^*u^{*}f\Vert^2
\end{align*}
and the proof is completed.
\end{proof}
\begin{corollary}
If $u\in\mathcal{B}(H)$ is an invertible operator on Hilbert spaces, $\Lambda$ is a $k$-g-fusion frame for $H$ with bounds $A, B$ and $ku=uk$, then $\Gamma:=(uW_j, \Lambda_j \pi_{W_j}u^{*}, v_j)$ is a $k$-g-fusion frame for $H$.
\end{corollary}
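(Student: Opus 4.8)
The plan is to piggyback on the preceding theorem: since $u$ is invertible, the theorem already tells us that $\Gamma=(uW_j,\Lambda_j\pi_{W_j}u^*,v_j)$ is a $uk$-g-fusion frame for $H$, i.e. there are constants $A,B>0$ with
\begin{equation*}
A\Vert (uk)^*f\Vert^2\leq\sum_{j\in\Bbb J}v_j^2\Vert\Lambda_j\pi_{W_j}u^*\pi_{uW_j}f\Vert^2\leq B\Vert f\Vert^2,\qquad f\in H.
\end{equation*}
So the only real task is to convert the lower bound, which is phrased in terms of $\Vert (uk)^*f\Vert=\Vert k^*u^*f\Vert$, into one phrased in terms of $\Vert k^*f\Vert$. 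The upper bound needs nothing: it is already $B\Vert f\Vert^2$.

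To do this I would first observe that $ku=uk$ together with invertibility of $u$ gives $u^{-1}k=ku^{-1}$, and taking adjoints yields $(u^*)^{-1}k^*=k^*(u^*)^{-1}$; equivalently $k^*=(u^*)^{-1}k^*u^*$. Hence for every $f\in H$,
\begin{equation*}
\Vert k^*f\Vert=\Vert (u^*)^{-1}k^*u^*f\Vert\leq\Vert u^{-1}\Vert\,\Vert k^*u^*f\Vert=\Vert u^{-1}\Vert\,\Vert (uk)^*f\Vert,
\end{equation*}
so that $\Vert (uk)^*f\Vert^2\geq\Vert u^{-1}\Vert^{-2}\Vert k^*f\Vert^2$.

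Plugging this into the $uk$-g-fusion lower bound gives
\begin{equation*}
A\Vert u^{-1}\Vert^{-2}\Vert k^*f\Vert^2\leq\sum_{j\in\Bbb J}v_j^2\Vert\Lambda_j\pi_{W_j}u^*\pi_{uW_j}f\Vert^2,
\end{equation*}
which, combined with the Bessel bound $B\Vert f\Vert^2$ from the theorem, shows $\Gamma$ is a $k$-g-fusion frame for $H$ with bounds $A\Vert u^{-1}\Vert^{-2}$ and $B$. There is essentially no obstacle here; the only thing one must be a little careful about is the direction of the inequality when passing between $\Vert k^*f\Vert$ and $\Vert k^*u^*f\Vert$, which is exactly where the commutativity hypothesis $ku=uk$ (via its adjoint form) is used. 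One could also note in passing that $u^*$ need not commute with $k^*$ unless $u$ commutes with $k$, which is why the hypothesis cannot be dropped.
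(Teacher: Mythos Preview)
Your argument is correct and is exactly the approach the paper intends: the corollary is stated immediately after the theorem with no proof, so it is meant to follow by combining the theorem's conclusion that $\Gamma$ is a $uk$-g-fusion frame with the commutation $ku=uk$ to replace $\Vert (uk)^*f\Vert$ by a constant times $\Vert k^*f\Vert$. One small point: you reuse the letters $A,B$ for the $uk$-g-fusion bounds of $\Gamma$, whereas in the corollary (and the theorem's proof) $A,B$ are the bounds of $\Lambda$; tracing through, the actual $k$-g-fusion bounds of $\Gamma$ come out as $A\Vert u^{-1}\Vert^{-2}$ and $B\Vert u\Vert^{2}$, but this is only a labeling issue, not a gap.
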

\begin{theorem}
Let $u\in\mathcal{B}(H)$ be an invertible and unitary operator on $H$ and $\Lambda$ be a $k$-g-fusion frame for $H$ with bounds $A$ and $B$. Then, $(uW_j, \Lambda_j u^{-1}, v_j)$ is a $(u^{-1})^*k$-g-fusion frame for $H$.
\end{theorem}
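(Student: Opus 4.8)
The plan is to reduce the claim to the defining inequalities of $\Lambda$ evaluated at the shifted point $u^{-1}f$. First I would note that since $u$ is invertible (indeed an isometry, by unitarity), each $uW_j$ is again a closed subspace of $H$, so that $\Gamma:=(uW_j,\Lambda_j u^{-1},v_j)$ is a well-defined triple. Because $u$ is unitary, the second part of Lemma \ref{l1} applies with $u$ in place of $T$ and $W_j$ in place of $V$, giving $\pi_{\overline{uW_j}}u=u\pi_{W_j}$; and as $\overline{uW_j}=uW_j$, multiplying this identity on the left by $u^{-1}$ and on the right by $u^{-1}$ yields the commutation relation
$$u^{-1}\pi_{uW_j}=\pi_{W_j}u^{-1}.$$
Hence for every $f\in H$ and every $j\in\Bbb J$ one has $\Lambda_j u^{-1}\pi_{uW_j}f=\Lambda_j\pi_{W_j}(u^{-1}f)$, and therefore
$$\sum_{j\in\Bbb J}v_j^2\Vert\Lambda_j u^{-1}\pi_{uW_j}f\Vert^2=\sum_{j\in\Bbb J}v_j^2\Vert\Lambda_j\pi_{W_j}(u^{-1}f)\Vert^2.$$

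Next I would insert $u^{-1}f$ into the $k$-g-fusion frame inequalities for $\Lambda$, obtaining
$$A\Vert k^*u^{-1}f\Vert^2\leq\sum_{j\in\Bbb J}v_j^2\Vert\Lambda_j\pi_{W_j}(u^{-1}f)\Vert^2\leq B\Vert u^{-1}f\Vert^2.$$
For the right-hand estimate, unitarity gives $u^{-1}=u^*$ with $u^*u=I$, so $\Vert u^{-1}f\Vert=\Vert f\Vert$ and the upper bound becomes $B\Vert f\Vert^2$ (in particular $\Gamma$ is a g-fusion Bessel sequence). For the left-hand estimate, $k^*u^{-1}=k^*\big((u^{-1})^*\big)^*=\big((u^{-1})^*k\big)^*$, so the lower bound reads $A\big\Vert\big((u^{-1})^*k\big)^*f\big\Vert^2$. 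Combining these with the displayed identity above gives
$$A\big\Vert\big((u^{-1})^*k\big)^*f\big\Vert^2\leq\sum_{j\in\Bbb J}v_j^2\Vert\Lambda_j u^{-1}\pi_{uW_j}f\Vert^2\leq B\Vert f\Vert^2$$
for all $f\in H$, which is exactly the assertion that $\Gamma=(uW_j,\Lambda_j u^{-1},v_j)$ is a $(u^{-1})^*k$-g-fusion frame for $H$, with the same bounds $A$ and $B$.

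The argument is short and I do not anticipate a genuine obstacle; the one delicate point is the commutation identity $u^{-1}\pi_{uW_j}=\pi_{W_j}u^{-1}$, which is precisely where the \emph{unitarity} of $u$ (rather than mere invertibility) is used, via Lemma \ref{l1}. For a merely invertible $u$ the projections $\pi_{uW_j}$ and $u^{-1}$ need not commute in this way, and one recovers only the weaker form of conclusion appearing in the preceding theorem.
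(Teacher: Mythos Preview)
Your argument is correct and follows the same route as the paper: use Lemma~\ref{l1} (unitarity of $u$) to replace $u^{-1}\pi_{uW_j}$ by $\pi_{W_j}u^{-1}$, then apply the $k$-g-fusion frame inequalities for $\Lambda$ at the point $u^{-1}f$. The paper states only the resulting chain $A\Vert k^*u^{-1}f\Vert^2\le\sum_j v_j^2\Vert\Lambda_j u^{-1}\pi_{uW_j}f\Vert^2\le B\Vert u^{-1}\Vert^2\Vert f\Vert^2$, leaving the commutation step and the rewriting $k^*u^{-1}=\big((u^{-1})^*k\big)^*$ implicit, so your version simply spells out what the paper compresses.
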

\begin{proof}
Using Lemma \ref{l1},  we can write for any $f\in H$,
\begin{align*}
A\Vert k^*u^{-1}f\Vert^2\leq\sum_{j\in\Bbb J}v_j^2\Vert \Lambda_j u^{-1}\pi_{uW_j}f\Vert^2\leq B\Vert u^{-1}\Vert^2\Vert f\Vert^2.
\end{align*}
\end{proof}
\begin{corollary}
If $u\in\mathcal{B}(H)$ is an invertible and unitary operator on Hilbert spaces, $\Lambda$ is a $k$-g-fusion frame for $H$ with bounds $A, B$ and $k^*u=uk^*$, then $(uW_j, \Lambda_j u^{-1}, v_j)$ is a $k$-g-fusion frame for $H$.
\end{corollary}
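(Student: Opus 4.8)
The plan is to obtain this as an immediate consequence of the preceding theorem together with the commutation hypothesis. First I would apply that theorem with the same operator $u$: it gives that the triple $(uW_j, \Lambda_j u^{-1}, v_j)$ is a $(u^{-1})^{*}k$-g-fusion frame for $H$, and in fact the chain of inequalities established in its proof reads, for every $f\in H$,
\[
A\Vert k^{*}u^{-1}f\Vert^{2}\le\sum_{j\in\Bbb J}v_j^{2}\Vert\Lambda_j u^{-1}\pi_{uW_j}f\Vert^{2}\le B\Vert u^{-1}\Vert^{2}\Vert f\Vert^{2}.
\]
So the whole task reduces to rewriting the two outer terms under the extra assumption $k^{*}u=uk^{*}$.

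Next I would exploit that $u$ is unitary. Then $u^{-1}=u^{*}$ is again unitary, hence an isometry, so $\Vert u^{-1}\Vert=1$ and the right-hand side of the display above is already $B\Vert f\Vert^{2}$. For the left-hand side, from $k^{*}u=uk^{*}$, multiplying on the left by $u^{-1}$ gives $u^{-1}k^{*}u=k^{*}$, and then multiplying on the right by $u^{-1}$ gives $u^{-1}k^{*}=k^{*}u^{-1}$; that is, $k^{*}$ commutes with $u^{-1}$ as well. Therefore $k^{*}u^{-1}f=u^{-1}k^{*}f$, and since $u^{-1}$ is an isometry, $\Vert k^{*}u^{-1}f\Vert=\Vert k^{*}f\Vert$ for all $f\in H$.

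Substituting these two observations into the displayed inequalities yields
\[
A\Vert k^{*}f\Vert^{2}\le\sum_{j\in\Bbb J}v_j^{2}\Vert\Lambda_j u^{-1}\pi_{uW_j}f\Vert^{2}\le B\Vert f\Vert^{2},\qquad f\in H,
\]
which is precisely the assertion that $(uW_j,\Lambda_j u^{-1},v_j)$ is a $k$-g-fusion frame for $H$, with the same bounds $A$ and $B$. There is no genuine obstacle in this argument; the only points requiring a little care are transferring the commutation relation from $u$ to $u^{-1}$, and using unitarity (not merely invertibility) of $u$ twice — once to discard the factor $\Vert u^{-1}\Vert^{2}$ in the upper bound, and once to keep $\Vert k^{*}f\Vert$ unchanged after composing with $u^{-1}$ in the lower bound.
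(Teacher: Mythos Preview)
Your argument is correct and is exactly the intended one: the paper states this corollary without proof, immediately after the theorem whose displayed inequality you quote, so the implicit proof is precisely to invoke that theorem and then use the commutation hypothesis together with unitarity of $u$ to simplify both outer terms. Your handling of the details (transferring $k^{*}u=uk^{*}$ to $k^{*}u^{-1}=u^{-1}k^{*}$ and using $\Vert u^{-1}\Vert=1$) is accurate and nothing is missing.
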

\begin{theorem}\label{th2}
If $u\in\mathcal{B}(H)$, $\Lambda$ is a $k$-g-fusion frame for $H$ with bounds $A, B$ and $\mathcal{R}(u)\subseteq\mathcal{R}(k)$, then $\Lambda$ is a $u$-g-fusion frame for $H$.
\end{theorem}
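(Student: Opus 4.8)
The plan is to reduce everything to the operator inequality supplied by Lemma \ref{dag}. The upper bound requires no work: since $\Lambda$ is a $k$-g-fusion frame, it is in particular a g-fusion Bessel sequence, so $\sum_{j\in\Bbb J}v_j^2\Vert\Lambda_j\pi_{W_j}f\Vert^2\le B\Vert f\Vert^2$ for all $f\in H$, and this inequality is exactly the upper frame inequality required of a $u$-g-fusion frame (the right-hand bound never involves the operator). Thus only the lower bound has to be established.

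For the lower bound I would apply Lemma \ref{dag} with $L_1=u$ and $L_2=k$ (both in $\mathcal{B}(H)$). The hypothesis $\mathcal{R}(u)\subseteq\mathcal{R}(k)$ is condition (1) of that lemma, which is equivalent to condition (2): there is $\lambda>0$ with $uu^*\le\lambda^2\,kk^*$. Evaluating this inequality on $f\in H$ gives
\begin{equation*}
\Vert u^*f\Vert^2=\langle uu^*f,f\rangle\le\lambda^2\langle kk^*f,f\rangle=\lambda^2\Vert k^*f\Vert^2.
\end{equation*}
Combining with the lower $k$-g-fusion frame inequality $A\Vert k^*f\Vert^2\le\sum_{j\in\Bbb J}v_j^2\Vert\Lambda_j\pi_{W_j}f\Vert^2$ yields
\begin{equation*}
\frac{A}{\lambda^2}\Vert u^*f\Vert^2\le A\Vert k^*f\Vert^2\le\sum_{j\in\Bbb J}v_j^2\Vert\Lambda_j\pi_{W_j}f\Vert^2,
\end{equation*}
so $\Lambda$ is a $u$-g-fusion frame for $H$ with bounds $A\lambda^{-2}$ and $B$.

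There is essentially no obstacle here; the only point worth being careful about is citing the correct implication in Lemma \ref{dag} (namely $(1)\Rightarrow(2)$) and noting that no closed-range or invertibility assumption on $u$ or $k$ is needed, since Lemma \ref{dag} produces the scalar $\lambda$ directly from the range inclusion. If one wishes, one can additionally remark via part (a) of Lemma \ref{dag} that the optimal such $\lambda^2$ is $\inf\{\alpha>0:uu^*\le\alpha\,kk^*\}$, which gives the sharpest lower bound $A/\lambda^2$ obtainable by this argument.
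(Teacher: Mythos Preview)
Your argument is correct and follows exactly the same route as the paper: invoke Lemma~\ref{dag} to obtain $\lambda>0$ with $uu^*\le\lambda^2 kk^*$, deduce $\Vert u^*f\Vert^2\le\lambda^2\Vert k^*f\Vert^2$, and combine with the lower $k$-g-fusion frame inequality to get the lower bound $A\lambda^{-2}$; the upper bound is unchanged. There is nothing to add.
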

\begin{proof}
Via Lemma \ref{dag}, there exists $\lambda>0$ such that $uu^*\leq\lambda^2kk^*$. Thus, for each $f\in H$ we have
$$\Vert u^*f\Vert^2=\langle uu^*f, f\rangle\leq\lambda^2\langle kk^*f, f\rangle=\lambda^2\Vert k^*f\Vert^2.$$
It follows that
$$\frac{A}{\lambda^2}\Vert u^*f\Vert^2\leq\sum_{j\in\Bbb J}v_j^2\Vert \Lambda_j\pi_{W_j}f\Vert^2.$$
\end{proof}
In the following example is showed that the condition $\mathcal{R}(u)\subseteq\mathcal{R}(k)$ in Corollary \ref{th2} is necessary.
\begin{example}
Let $H=\Bbb R^3$ and $\{e_1, e_2, e_3\}$ be an orthonormal basis for $H$. We define two operators $k$ and $u$ on $H$ by
\begin{align*}
&ke_1=e_2, \ \ \ ke_2=e_3, \ \ \ ke_3=e_3;\\
&ue_1=0, \ \ \ ue_2=e_1, \ \ \ ue_3=e_2.
\end{align*}
Suppose that $W_j=H_j:=\mbox{span}\{e_j\}$ where $j=1,2,3$. Let
\begin{align*}
\Lambda_j f:=\langle f, e_j\rangle e_j=f_j e_j,
\end{align*}
where $f=(f_1, f_2, f_3)$ and $j=1,2,3$. It is clear that $(W_j,
\Lambda_j, 1)$ is a $k$-g-fusion frame for $H$ with bounds
$\frac{1}{2}$ and $1$, respectively. Assume that $(W_j, \Lambda_j,
1)$ is a $u$-g-fusion frame for $H$. Then, by Proposition \ref{pr3},
there exists $A>0$ auch that $S_{\Lambda}=kk^*\geq Auu^*$. So, by
Lemma \ref{dag}, $\mathcal{R}(u)\subseteq \mathcal{R}(k)$. But, this
is a contradiction with $\mathcal{R}(k)\nsubseteq \mathcal{R}(u)$,
since $e_1\in \mathcal{R}(u)$ but $e_1\notin \mathcal{R}(k)$.
\end{example}
\begin{theorem}
Let $\Lambda:=(W_j, \Lambda_j, v_j)$ and $\Theta:=(W_j, \Theta_j, v_j)$ be two g-fusion Bessel sequences for $H$ with bounds $B_1$ and $B_2$, respectively. Suppose that $T_{\Lambda}$ and $T_{\Theta}$ be their analysis operators such that $T_{\Theta}T^*_{\Lambda}=k^*$ where $k\in\mathcal{B}(H)$. Then, both $\Lambda$ and $\Theta$ are $k$-g-fusion frames.
\end{theorem}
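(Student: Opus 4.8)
The plan is to extract both conclusions directly from the single hypothesis $T_{\Theta}T_{\Lambda}^{*}=k^{*}$, using nothing beyond the Cauchy--Schwarz inequality and the elementary fact that a g-fusion Bessel sequence with bound $B$ has analysis operator (hence also synthesis operator) of norm at most $\sqrt{B}$.

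For $\Lambda$ the upper inequality $\sum_{j\in\Bbb J}v_{j}^{2}\Vert\Lambda_{j}\pi_{W_{j}}f\Vert^{2}\le B_{1}\Vert f\Vert^{2}$ is exactly the assumed Bessel bound, so only a lower bound must be produced. For $f\in H$ I would write
\[
\Vert k^{*}f\Vert=\Vert T_{\Theta}T_{\Lambda}^{*}f\Vert\le\Vert T_{\Theta}\Vert\,\Vert T_{\Lambda}^{*}f\Vert\le\sqrt{B_{2}}\Big(\sum_{j\in\Bbb J}v_{j}^{2}\Vert\Lambda_{j}\pi_{W_{j}}f\Vert^{2}\Big)^{1/2},
\]
since $\Vert T_{\Theta}\Vert=\Vert T_{\Theta}^{*}\Vert\le\sqrt{B_{2}}$ and $\Vert T_{\Lambda}^{*}f\Vert^{2}=\sum_{j\in\Bbb J}v_{j}^{2}\Vert\Lambda_{j}\pi_{W_{j}}f\Vert^{2}$ by the definition of the analysis operator. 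Squaring yields $B_{2}^{-1}\Vert k^{*}f\Vert^{2}\le\sum_{j\in\Bbb J}v_{j}^{2}\Vert\Lambda_{j}\pi_{W_{j}}f\Vert^{2}$, so $\Lambda$ is a $k$-g-fusion frame with bounds $B_{2}^{-1}$ and $B_{1}$.

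For $\Theta$ I would take adjoints in the hypothesis, obtaining $T_{\Lambda}T_{\Theta}^{*}=(T_{\Theta}T_{\Lambda}^{*})^{*}=k$, and then run the same estimate with the roles of $\Lambda$ and $\Theta$ interchanged, now using $\Vert T_{\Lambda}\Vert\le\sqrt{B_{1}}$. This gives $\Vert kf\Vert\le\sqrt{B_{1}}\big(\sum_{j\in\Bbb J}v_{j}^{2}\Vert\Theta_{j}\pi_{W_{j}}f\Vert^{2}\big)^{1/2}$, i.e. $B_{1}^{-1}\Vert kf\Vert^{2}\le\sum_{j\in\Bbb J}v_{j}^{2}\Vert\Theta_{j}\pi_{W_{j}}f\Vert^{2}\le B_{2}\Vert f\Vert^{2}$; one can also phrase this via Proposition \ref{pr3} after writing $kk^{*}=T_{\Lambda}S_{\Theta}T_{\Lambda}^{*}$ and seeking a constant $A>0$ with $S_{\Theta}\ge A\,kk^{*}$.

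The point to watch is precisely this second half: the natural estimate for $\Theta$ puts $\Vert kf\Vert$ on the left-hand side, which is literally the defining inequality of a $k^{*}$-g-fusion frame; it becomes a genuine $k$-g-fusion frame exactly when $k$ is normal (so that $\Vert kf\Vert=\Vert k^{*}f\Vert$ for all $f$), in particular when $k$ is self-adjoint. The $\Lambda$-half is immediate, and everything else is routine operator-norm bookkeeping.
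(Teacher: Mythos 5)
Your proof of the $\Lambda$ half is the paper's argument up to cosmetics: the paper expands $\Vert k^*f\Vert^4=\langle T_{\Lambda}^*f,T_{\Theta}^*k^*f\rangle^2$, applies Cauchy--Schwarz and the Bessel bound of $\Theta$ at the point $k^*f$, then cancels $\Vert k^*f\Vert^2$; you shortcut this by $\Vert k^*f\Vert=\Vert T_{\Theta}T_{\Lambda}^*f\Vert\le\Vert T_{\Theta}\Vert\,\Vert T_{\Lambda}^*f\Vert$ with $\Vert T_{\Theta}\Vert\le\sqrt{B_2}$, and both routes give the lower bound $B_2^{-1}$. The substantive point is your caveat on the $\Theta$ half, and you are right: the paper dismisses it with ``similarly,'' but the symmetric computation, applied to $k=T_{\Lambda}T_{\Theta}^*$, yields $B_1^{-1}\Vert kf\Vert^2\le\sum_{j}v_j^2\Vert\Theta_j\pi_{W_j}f\Vert^2$, which is the defining inequality of a $k^*$-g-fusion frame, not of a $k$-g-fusion frame. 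These genuinely differ: take a single subspace $W_1=H=\mathbb{C}^2$, $v_1=1$, $\Lambda_1=Id$, $\Theta_1=k$ with $k$ a nonzero nilpotent operator; then $T_{\Theta}T_{\Lambda}^*=k^*$ and both sequences are Bessel, but for $f\in\ker k\setminus\ker k^*$ one gets $\sum_{j}v_j^2\Vert\Theta_j\pi_{W_j}f\Vert^2=\Vert kf\Vert^2=0<\Vert k^*f\Vert^2$, so $\Theta$ is not a $k$-g-fusion frame. The conclusion for $\Theta$ should therefore read ``$k^*$-g-fusion frame'' (or $k$ must be assumed normal), exactly as you observe; with that corrected statement your argument is complete. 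One minor slip in your closing aside: from $k=T_{\Lambda}T_{\Theta}^*$ one gets $kk^*=T_{\Lambda}(T_{\Theta}^*T_{\Theta})T_{\Lambda}^*$, where $T_{\Theta}^*T_{\Theta}$ is the Gram operator on $\mathscr{H}_2$, not the frame operator $S_{\Theta}=T_{\Theta}T_{\Theta}^*$; that remark is not needed for your proof in any case.
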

\begin{proof}
For each $f\in H$ we have
\begin{align*}
\Vert k^*f\Vert^4&=\langle k^*f, k^*f\rangle^2\\
&=\langle T^*_{\Lambda}f, T^*_{\Theta}k^*f\rangle^2\\
&\leq\Vert T^*_{\Lambda}f\Vert^2 \Vert T^*_{\Theta}k^*f\Vert^2\\
&=\big(\sum_{j\in\Bbb I}v_j^2\Vert \Lambda_j \pi_{W_j}f\Vert^2\big)\big(\sum_{j\in\Bbb I}v_j^2\Vert \Theta_j \pi_{W_j}k^*f\Vert^2\big)\\
&\leq\big(\sum_{j\in\Bbb I}v_j^2\Vert \Lambda_j \pi_{W_j}f\Vert^2\big) B_2 \Vert k^*f\Vert^2,
\end{align*}
thus, $B_2^{-1}\Vert k^*f\Vert^2\leq\sum_{j\in\Bbb I}v_j^2\Vert \Lambda_j \pi_{W_j}f\Vert^2$. This means that $\Lambda$ is a $k$-g-fusion frame for $H$. Similarly, $\Theta$ is a $k$-g-fusion frame with the lower bound $B_1^{-1}$.
\end{proof}

\section{$Q$-Duality of $k$-g-Fusion Frames}
In this section, we shall define duality of $k$-g-fusion frames and present some properties of them.
\begin{definition}
Let $\Lambda=(W_j, \Lambda_j, v_j)$ be a $k$-g-fusion frame for $H$. A g-fusion Bessel sequence $\tilde{\Lambda}:=(\tilde{W_j}, \tilde{\Lambda_j}, \tilde{v_j})$ is called $Q$-dual $k$-g-fusion frame (or brevity $Qk$-gf dual) for $\Lambda$ if there exists a bounded linear operator $Q:\mathscr{H}_2\rightarrow\mathscr{\tilde{H}}_2$ such that
\begin{equation}
T_{\Lambda}Q^*T_{\tilde{\Lambda}}^*=k,
\end{equation}
where $\mathscr{\tilde{H}}_2=(\sum_{j\in\Bbb J}\oplus \tilde{H}_j)_{\ell_2}$.
\end{definition}
Like another $k$-frames, the following presents equivalent conditions of the duality.
\begin{proposition}
Let $\tilde{\Lambda}$ be a $Qk$-gf dual for $\Lambda$. The following conditions are equvalent:
\begin{enumerate}
\item
$T_{\Lambda}Q^*T^*_{\tilde{\Lambda}}=k$;\item
$T_{\tilde{\Lambda}}QT^*_{\Lambda}=k^*$;\item
for each $f, f'\in H$, we have
$$\langle kf, f'\rangle=\langle T^*_{\tilde{\Lambda}}f,QT^*_{\Lambda}f'\rangle=\langle Q^*T^*_{\tilde{\Lambda}}f,T^*_{\Lambda}f'\rangle.$$
\end{enumerate}
\end{proposition}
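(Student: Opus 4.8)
The plan is to prove the chain $(1)\Leftrightarrow(2)\Leftrightarrow(3)$ purely by manipulating Hilbert-space adjoints and by passing between operator identities and their associated sesquilinear forms; no frame-theoretic input beyond the definitions of $T_{\Lambda}$, $T_{\tilde{\Lambda}}$ and $Q$ is needed. Throughout I keep in mind that $T_{\Lambda}\in\mathcal{B}(\mathscr{H}_2,H)$, $T_{\tilde{\Lambda}}\in\mathcal{B}(\mathscr{\tilde{H}}_2,H)$ and $Q\in\mathcal{B}(\mathscr{H}_2,\mathscr{\tilde{H}}_2)$, so that $T_{\Lambda}Q^*T^*_{\tilde{\Lambda}}$ and $T_{\tilde{\Lambda}}QT^*_{\Lambda}$ are well-defined bounded operators on $H$.

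First I would establish $(1)\Leftrightarrow(2)$ by taking adjoints of the identity in $(1)$. Using $(AB)^*=B^*A^*$ and $(A^*)^*=A$, one computes $(T_{\Lambda}Q^*T^*_{\tilde{\Lambda}})^*=T_{\tilde{\Lambda}}QT^*_{\Lambda}$, while the adjoint of $k$ is $k^*$. Since the adjoint is an involution on $\mathcal{B}(H)$, the equality $T_{\Lambda}Q^*T^*_{\tilde{\Lambda}}=k$ holds if and only if $T_{\tilde{\Lambda}}QT^*_{\Lambda}=k^*$, which is $(2)$.

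Next, for $(1)\Leftrightarrow(3)$, I would rewrite the matrix coefficients of $T_{\Lambda}Q^*T^*_{\tilde{\Lambda}}$: for all $f,f'\in H$,
$$\langle T_{\Lambda}Q^*T^*_{\tilde{\Lambda}}f,f'\rangle=\langle Q^*T^*_{\tilde{\Lambda}}f,T^*_{\Lambda}f'\rangle=\langle T^*_{\tilde{\Lambda}}f,QT^*_{\Lambda}f'\rangle,$$
obtained by moving $T_{\Lambda}$ and then $Q^*$ across the inner product. Hence condition $(1)$ is equivalent to saying that $\langle kf,f'\rangle$ equals both of these expressions for every $f,f'\in H$, which is precisely $(3)$. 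For the converse direction $(3)\Rightarrow(1)$ I would invoke the standard fact that an operator on a Hilbert space is determined by its sesquilinear form: if $\langle kf,f'\rangle=\langle T_{\Lambda}Q^*T^*_{\tilde{\Lambda}}f,f'\rangle$ for all $f,f'$, then $kf=T_{\Lambda}Q^*T^*_{\tilde{\Lambda}}f$ for all $f$, i.e. $(1)$ holds.

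The argument is essentially mechanical, and the only step deserving even a brief mention is this last one — that equality of all matrix coefficients forces equality of the operators — which I would use without further comment. So there is no genuine obstacle here beyond careful bookkeeping of the domains and codomains of $T_{\Lambda}$, $T_{\tilde{\Lambda}}$ and $Q$.
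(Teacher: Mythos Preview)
Your proposal is correct and is precisely the routine adjoint/inner-product manipulation the paper has in mind; the paper itself dismisses the proof as ``Straightforward'' without writing out any details, so your argument is in fact more explicit than the original while following the same approach.
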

\begin{proof}
Straightforward.
\end{proof}
\begin{theorem}
If $\tilde{\Lambda}$ is a $Qk$-gf dual for $\Lambda$, then $\tilde{\Lambda}$ is a $k^*$-g-fusion frame for $H$.
\end{theorem}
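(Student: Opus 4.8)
The plan is to check the two defining inequalities of a $k^{*}$-g-fusion frame for $\tilde{\Lambda}$ one at a time: the upper (Bessel) bound is granted by hypothesis, and the lower bound will be extracted from the operator identity $T_{\Lambda}Q^{*}T^{*}_{\tilde{\Lambda}}=k$ by a single application of submultiplicativity of the operator norm.

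First, since $\tilde{\Lambda}$ is a g-fusion Bessel sequence for $H$, there is $B>0$ with
\[
\sum_{j\in\Bbb J}\tilde{v}_{j}^{2}\Vert\tilde{\Lambda}_{j}\pi_{\tilde{W}_{j}}f\Vert^{2}=\Vert T^{*}_{\tilde{\Lambda}}f\Vert^{2}\leq B\Vert f\Vert^{2}\qquad(f\in H),
\]
which is exactly the upper bound demanded by the definition (and it involves only $\tilde{\Lambda}$, not $k$). Moreover $T_{\Lambda}$, being the synthesis operator of the g-fusion Bessel sequence $\Lambda$ (a $k$-g-fusion frame is in particular a g-fusion Bessel sequence), is bounded, and $Q\in\mathcal{B}(\mathscr{H}_{2},\mathscr{\tilde{H}}_{2})$, so $T_{\Lambda}Q^{*}\in\mathcal{B}(\mathscr{\tilde{H}}_{2},H)$ and $c:=\Vert T_{\Lambda}Q^{*}\Vert<\infty$.

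For the lower bound, fix $f\in H$ and use $k=T_{\Lambda}Q^{*}T^{*}_{\tilde{\Lambda}}$ to get
\[
\Vert kf\Vert=\Vert T_{\Lambda}Q^{*}T^{*}_{\tilde{\Lambda}}f\Vert\leq c\,\Vert T^{*}_{\tilde{\Lambda}}f\Vert,
\]
hence, after squaring and rearranging,
\[
c^{-2}\Vert kf\Vert^{2}\leq\Vert T^{*}_{\tilde{\Lambda}}f\Vert^{2}=\sum_{j\in\Bbb J}\tilde{v}_{j}^{2}\Vert\tilde{\Lambda}_{j}\pi_{\tilde{W}_{j}}f\Vert^{2}.
\]
Since the assertion that $\tilde{\Lambda}$ is a $k^{*}$-g-fusion frame reads $A\Vert(k^{*})^{*}f\Vert^{2}=A\Vert kf\Vert^{2}\leq\sum_{j\in\Bbb J}\tilde{v}_{j}^{2}\Vert\tilde{\Lambda}_{j}\pi_{\tilde{W}_{j}}f\Vert^{2}\leq B\Vert f\Vert^{2}$, the constants $A=c^{-2}$ and $B$ finish the proof. (If $c=0$ then $k=0$ and the lower estimate is trivial for any $A>0$; otherwise $c>0$ and $c^{-2}$ is a legitimate positive lower bound.)

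There is no serious obstacle here; the whole argument is essentially one line once the identities $\Vert T^{*}_{\tilde{\Lambda}}f\Vert^{2}=\sum_{j}\tilde{v}_{j}^{2}\Vert\tilde{\Lambda}_{j}\pi_{\tilde{W}_{j}}f\Vert^{2}$ and $k=T_{\Lambda}Q^{*}T^{*}_{\tilde{\Lambda}}$ are in hand. The only things worth stating carefully are that $T_{\Lambda}Q^{*}$ is bounded (so $c<\infty$, keeping $A$ strictly positive when $k\neq0$) and that unwinding the definition of a $k^{*}$-g-fusion frame turns the symbol ``$k^{*}$'' appearing there into $(k^{*})^{*}=k$. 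Alternatively one could run the lower estimate through part~(3) of the preceding Proposition, writing $\langle kf,f'\rangle=\langle Q^{*}T^{*}_{\tilde{\Lambda}}f,\,T^{*}_{\Lambda}f'\rangle$ and taking the supremum over unit vectors $f'$, but the direct norm computation above is cleaner.
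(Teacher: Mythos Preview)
Your proof is correct and follows essentially the same idea as the paper: both use the identity $k=T_{\Lambda}Q^{*}T^{*}_{\tilde{\Lambda}}$ together with the Bessel bound on $\tilde{\Lambda}$ to produce the lower estimate $A\Vert kf\Vert^{2}\leq\Vert T^{*}_{\tilde{\Lambda}}f\Vert^{2}$. The only cosmetic difference is that the paper passes through $\Vert kf\Vert^{4}=\vert\langle T^{*}_{\tilde{\Lambda}}f,\,QT^{*}_{\Lambda}kf\rangle\vert^{2}$ and applies Cauchy--Schwarz to obtain the constant $A=(B\Vert Q\Vert^{2})^{-1}$ (with $B$ the upper bound of $\Lambda$), whereas you apply submultiplicativity of the operator norm directly to get $A=\Vert T_{\Lambda}Q^{*}\Vert^{-2}$; since $\Vert T_{\Lambda}Q^{*}\Vert^{2}\leq\Vert T_{\Lambda}\Vert^{2}\Vert Q\Vert^{2}\leq B\Vert Q\Vert^{2}$, your bound is at least as sharp and the argument is a line shorter.
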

\begin{proof}
Let $f\in H$ and $B$ be an upper bound of $\Lambda$. Therefore,
\begin{align*}
\Vert kf\Vert^4&=\vert\langle kf, kf\rangle\vert^2\\
&=\vert\langle T_{\Lambda}Q^*T^*_{\tilde{\Lambda}}f, kf\rangle\vert^2\\
&=\vert\langle T^*_{\tilde{\Lambda}}f, QT^*_{\Lambda}kf\rangle\vert^2\\
&\leq\Vert T^*_{\tilde{\Lambda}}f\Vert^2\Vert Q\Vert^2 B\Vert kf\Vert^2\\
&=\Vert Q\Vert^2 B\Vert kf\Vert^2\sum_{j\in\Bbb J}\tilde{v}_j^2\Vert\tilde{\Lambda}_j\pi_{\tilde{W}_j}f\Vert^2
\end{align*}
and by definition, this completes the proof.
\end{proof}
\begin{corollary}
Assume $C_{op}$ and $D_{op}$ are the optimal bounds of $\tilde{\Lambda}$, respectively. Then
\begin{eqnarray*}
C_{op}\geq B_{op}^{-1}\Vert Q\Vert^{-2} \ \ \ and \ \ \ D_{op}\geq A_{op}^{-1}\Vert Q\Vert^{-2}
\end{eqnarray*}
which $A_{op}$ and $B_{op}$ are the optimal bounds of $\Lambda$, respectively.
\end{corollary}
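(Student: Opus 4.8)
The plan is to read off the quantitative content of the previous theorem and then to exploit the symmetry between $\Lambda$ and $\tilde{\Lambda}$ furnished by the Proposition above. First I would revisit the computation in the proof of the previous theorem: for an arbitrary upper bound $B$ of $\Lambda$ and every $f\in H$ it produces
\[
\|kf\|^{2}\leq \|Q\|^{2}\,B\sum_{j\in\Bbb J}\tilde{v}_{j}^{2}\|\tilde{\Lambda}_{j}\pi_{\tilde{W}_{j}}f\|^{2},
\]
that is, $\|Q\|^{-2}B^{-1}$ is a lower $k^{*}$-g-fusion frame bound for $\tilde{\Lambda}$. Since the optimal upper bound $B_{op}$ of $\Lambda$ is itself an admissible upper bound, $\|Q\|^{-2}B_{op}^{-1}$ is an admissible lower bound of $\tilde{\Lambda}$, and because $C_{op}$ is the supremum of all admissible lower bounds we conclude $C_{op}\geq \|Q\|^{-2}B_{op}^{-1}$.

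For the second inequality I would exploit that $\Lambda$ is in turn a $Q$-dual of $\tilde{\Lambda}$. Indeed, by the Proposition the identity $T_{\Lambda}Q^{*}T^{*}_{\tilde{\Lambda}}=k$ is equivalent to $T_{\tilde{\Lambda}}QT^{*}_{\Lambda}=k^{*}$, i.e. $T_{\tilde{\Lambda}}(Q^{*})^{*}T^{*}_{\Lambda}=k^{*}$; since $\tilde{\Lambda}$ is a $k^{*}$-g-fusion frame by the previous theorem and $\|Q^{*}\|=\|Q\|$, this says precisely that $\Lambda$ is a $Q^{*}k^{*}$-gf dual of $\tilde{\Lambda}$. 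Applying the previous theorem with the roles of $\Lambda$ and $\tilde{\Lambda}$ interchanged, $k$ replaced by $k^{*}$ and $Q$ replaced by $Q^{*}$, we get for every upper bound $D$ of $\tilde{\Lambda}$ and every $f\in H$
\[
\|Q\|^{-2}D^{-1}\|k^{*}f\|^{2}\leq\sum_{j\in\Bbb J}v_{j}^{2}\|\Lambda_{j}\pi_{W_{j}}f\|^{2},
\]
so $\|Q\|^{-2}D^{-1}$ is a lower $k$-g-fusion frame bound of $\Lambda$. By optimality of $A_{op}$ this forces $A_{op}\geq\|Q\|^{-2}D^{-1}$, i.e. $D\geq\|Q\|^{-2}A_{op}^{-1}$; as this holds for every admissible upper bound $D$ of $\tilde{\Lambda}$, passing to the infimum over such $D$ yields $D_{op}\geq\|Q\|^{-2}A_{op}^{-1}$.

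I expect the only real subtlety to be keeping the directions of the optimal-bound inequalities straight: the optimal lower bound is a supremum, so for $C_{op}$ it suffices to exhibit one good admissible lower bound (the one coming from $B_{op}$), whereas the optimal upper bound is an infimum, so for $D_{op}$ one must establish $D\geq\|Q\|^{-2}A_{op}^{-1}$ for \emph{all} admissible $D$ before taking the infimum. The symmetrization step --- recognizing via the Proposition that $\Lambda$ is a $Q^{*}k^{*}$-gf dual of $\tilde{\Lambda}$ --- is the conceptual core of the argument, but it is immediate once stated; everything else is the bookkeeping already performed in the preceding proof.
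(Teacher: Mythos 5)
Your proof is correct and follows the route the paper clearly intends: the corollary is stated without proof as an immediate consequence of the preceding theorem, whose displayed estimate gives $\Vert kf\Vert^{2}\leq\Vert Q\Vert^{2}B\sum_{j}\tilde{v}_{j}^{2}\Vert\tilde{\Lambda}_{j}\pi_{\tilde{W}_{j}}f\Vert^{2}$ and hence $C_{op}\geq B_{op}^{-1}\Vert Q\Vert^{-2}$, while the second inequality comes from the symmetrization $T_{\tilde{\Lambda}}QT^{*}_{\Lambda}=k^{*}$ (item (2) of the Proposition), exactly as you argue. Your care about the sup/inf directions for the optimal bounds is the right bookkeeping and nothing is missing.
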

Suppose that $\Lambda$ be a $k$-g-fusion frame for $H$. Since $S_F\geq Akk^*$, then by Lemma \ref{dag}, there exists an opeartor $u\in\mathcal{B}\big(H, \mathscr{H}_2)$ such that
\begin{eqnarray}\label{r8}
T_{\Lambda} u=k.
\end{eqnarray}
Now, we define the $j$-th component of $uf$ by $u_jf=(uf)_j$ for each $f\in H$. It is clear that $u_j\in\mathcal{B}(H, H_j)$. By this operator, we can construct some $Qk$-gf duals for $\Lambda$.
\begin{theorem}
Let $\Lambda$ be a $k$-g-fusion frame for $H$. If $u$ be an operator as in (\ref{r8}) and $\tilde{W_j}:=u^*_juW_j$. Then $\tilde{\Lambda}:=(\tilde{W_j}, \Lambda_j, v_j)$ is a  $Qk$-gf dual for $\Lambda$.
\end{theorem}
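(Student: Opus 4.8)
The plan is to route everything through the operator $u$ of $(\ref{r8})$. Recall that, since $\Lambda$ is a $k$-g-fusion frame, $S_{\Lambda}\ge Akk^{*}$, so Lemma \ref{dag} provides $u\in\mathcal{B}(H,\mathscr{H}_2)$ with $T_{\Lambda}u=k$ (one may take the $u$ of Lemma \ref{dag}, so that $\ker u=\ker k$ and $\mathcal{R}(u)\subseteq\overline{\mathcal{R}(T_{\Lambda}^{*})}$, should that be needed). Its components $u_j\in\mathcal{B}(H,H_j)$, $u_jf=(uf)_j$, satisfy $\sum_{j\in\Bbb J}\Vert u_jf\Vert^{2}=\Vert uf\Vert^{2}\le\Vert u\Vert^{2}\Vert f\Vert^{2}$ and $kf=\sum_{j\in\Bbb J}v_j\pi_{W_j}\Lambda_j^{*}u_jf$. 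Since $\tilde{\Lambda}_j=\Lambda_j$ forces $\tilde{H}_j=H_j$, we have $\mathscr{\tilde{H}}_2=\mathscr{H}_2$, so $Q$ is to be sought in $\mathcal{B}(\mathscr{H}_2,\mathscr{H}_2)$; by the definition of a $Qk$-gf dual it then suffices to show that $\tilde{\Lambda}=(\tilde{W}_j,\Lambda_j,v_j)$ is a g-fusion Bessel sequence and that some bounded $Q$ satisfies $T_{\Lambda}Q^{*}T_{\tilde{\Lambda}}^{*}=k$.

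The heart of the argument is a componentwise comparison of the building blocks $v_j\Lambda_j\pi_{\tilde{W}_j}$ of $\tilde{\Lambda}$ with the components $u_j$ of $u$. Since $\tilde{W}_j$ is (the closed span of) $u_j^{*}u_jW_j$, the ranges $\mathcal{R}(\pi_{\tilde{W}_j}\Lambda_j^{*})$ and $\mathcal{R}(u_j^{*})$ should coincide, and I would invoke Lemma \ref{dag} in its ``range inclusion $\Longleftrightarrow$ operator domination'' form (once applied to $u_j^{*}$ against $\pi_{\tilde{W}_j}\Lambda_j^{*}$, once in the reverse direction) to obtain constants $c,C>0$, \emph{independent of} $j$, with
$$\sqrt{c}\,\Vert u_jf\Vert\ \le\ v_j\,\Vert\Lambda_j\pi_{\tilde{W}_j}f\Vert\ \le\ \sqrt{C}\,\Vert u_jf\Vert,\qquad f\in H,\ j\in\Bbb J.$$
The uniformity in $j$ is the crucial feature, and the only point at which the particular subspaces $\tilde{W}_j$ are used.

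Granting this, both assertions follow immediately. Summing the right-hand estimate gives $\sum_{j\in\Bbb J}v_j^{2}\Vert\Lambda_j\pi_{\tilde{W}_j}f\Vert^{2}\le C\sum_{j\in\Bbb J}\Vert u_jf\Vert^{2}\le C\Vert u\Vert^{2}\Vert f\Vert^{2}$, so $\tilde{\Lambda}$ is a g-fusion Bessel sequence. Summing the left-hand estimate gives $\Vert uf\Vert^{2}=\sum_{j\in\Bbb J}\Vert u_jf\Vert^{2}\le c^{-1}\sum_{j\in\Bbb J}v_j^{2}\Vert\Lambda_j\pi_{\tilde{W}_j}f\Vert^{2}=c^{-1}\Vert T_{\tilde{\Lambda}}^{*}f\Vert^{2}$; in particular $T_{\tilde{\Lambda}}^{*}f=0$ forces $uf=0$, so the rule $T_{\tilde{\Lambda}}^{*}f\mapsto uf$ is a well-defined linear map on $\mathcal{R}(T_{\tilde{\Lambda}}^{*})$, bounded by $c^{-1/2}$. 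Extending it by continuity to $\overline{\mathcal{R}(T_{\tilde{\Lambda}}^{*})}$ and by zero on the orthogonal complement yields $Q^{*}\in\mathcal{B}(\mathscr{H}_2,\mathscr{H}_2)$ with $Q^{*}T_{\tilde{\Lambda}}^{*}=u$, hence $Q\in\mathcal{B}(\mathscr{H}_2,\mathscr{\tilde{H}}_2)$ and $T_{\Lambda}Q^{*}T_{\tilde{\Lambda}}^{*}=T_{\Lambda}u=k$. By definition $\tilde{\Lambda}$ is then a $Qk$-gf dual for $\Lambda$.

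The main obstacle is precisely the displayed two-sided comparison: one must check the range identity $\mathcal{R}(\pi_{\tilde{W}_j}\Lambda_j^{*})=\mathcal{R}(u_j^{*})$ from the definition of $\tilde{W}_j$, and -- considerably more delicately -- that the constants furnished by Lemma \ref{dag} can be chosen uniformly in $j$, using the global bound $\Vert u\Vert$ together with the frame bounds of $\Lambda$, all while keeping track of the fact that $\Lambda_j$, $u_j$ and the orthogonal projections $\pi_{W_j},\pi_{\tilde{W}_j}$ need not have closed range.
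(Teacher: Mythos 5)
Your construction of $Q$ is exactly the paper's: define $Q^*$ on $\mathcal{R}(T_{\tilde{\Lambda}}^*)$ by $T_{\tilde{\Lambda}}^*f\mapsto uf$, extend by continuity to the closure and by zero on its orthogonal complement, and conclude $T_{\Lambda}Q^*T_{\tilde{\Lambda}}^*=T_{\Lambda}u=k$. The difference lies in how well-definedness and boundedness of this map are justified, and there your argument has a genuine gap. You make everything rest on the displayed two-sided, $j$-uniform comparison $\sqrt{c}\,\Vert u_jf\Vert\le v_j\Vert\Lambda_j\pi_{\tilde{W}_j}f\Vert\le\sqrt{C}\,\Vert u_jf\Vert$, which you explicitly do not prove, and which Lemma \ref{dag} cannot deliver in the form you need. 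First, Douglas's lemma produces a constant for each fixed pair of operators; applied to the pair $u_j^*$ versus $v_j\pi_{\tilde{W}_j}\Lambda_j^*$ it yields a $j$-dependent constant, and nothing in the hypotheses controls these constants uniformly: the frame bounds of $\Lambda$ constrain $\Lambda_j\pi_{W_j}$, not $\Lambda_j\pi_{\tilde{W}_j}$, and $\Vert u\Vert$ only bounds the sum $\sum_j\Vert u_jf\Vert^2$. Second, and more seriously, the lower inequality requires the range inclusion $\mathcal{R}(u_j^*)\subseteq\mathcal{R}(v_j\pi_{\tilde{W}_j}\Lambda_j^*)$, which does not follow from the definition of $\tilde{W}_j$: that definition gives only $\tilde{W}_j\subseteq\overline{\mathcal{R}(u_j^*)}$, a one-sided inclusion of closures, and $\Lambda_j$ need not be bounded below (or even injective) on $\tilde{W}_j$, so $\mathcal{R}(\pi_{\tilde{W}_j}\Lambda_j^*)$ can be a strictly smaller, non-closed subspace of $\tilde{W}_j$. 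The asserted range identity $\mathcal{R}(\pi_{\tilde{W}_j}\Lambda_j^*)=\mathcal{R}(u_j^*)$ is therefore not something one can ``check from the definition of $\tilde{W}_j$.''

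What the argument actually needs is weaker but still nontrivial. For well-definedness one needs only that $T_{\tilde{\Lambda}}^*f=0$ forces $uf=0$; the paper gets this qualitatively from $(\tilde{W}_j)^{\perp}=\mathcal{R}(u_j^*)^{\perp}=\ker u_j$ (itself relying on the unexamined implication $\Lambda_j\pi_{\tilde{W}_j}f=0\Rightarrow\pi_{\tilde{W}_j}f=0$). For boundedness one needs the single global estimate $\Vert uf\Vert^2\le c^{-1}\Vert T_{\tilde{\Lambda}}^*f\Vert^2$, which by Lemma \ref{dag} is equivalent to $\mathcal{R}(u^*)\subseteq\mathcal{R}(T_{\tilde{\Lambda}})$. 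Neither you nor the paper establishes this (the paper simply declares boundedness of $\Phi_0$ to be clear), so you have correctly located the real difficulty; but replacing one unproven global inequality by a strictly stronger unproven componentwise one, whose lower half is implausible in general, does not close the proof. The Bessel property of $\tilde{\Lambda}$, which the definition of a $Qk$-gf dual requires, is likewise left hanging on the unproven upper inequality.
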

\begin{proof}
Define the mapping
\begin{align*}
\Phi_0:\mathcal{R}(T_{\tilde{\Lambda}}^*)&\rightarrow\mathscr{H}_2,\\
\Phi_0(T_{\tilde{\Lambda}}^*f)&=uf.
\end{align*}
Then $\Phi_0$ is well-defined. indeed, if $f_1 ,f_2\in H$ and $T_{\tilde{\Lambda}}^*f_1=T_{\tilde{\Lambda}}^*f_2$, then $\pi_{\tilde{W_j}}(f_1-f_2)=0$. Therefore, for any $j\in\Bbb J$,
$$f_1-f_2\in(\tilde{W_j})^{\perp}=\mathcal{R}(u_j^*)^{\perp}=\ker{u_j}.$$
Thus, $uh_1=uh_2$.
It is clear that $\Phi_0$ be bounded and linear. Therefore, it has a unique linear extension (also denoted $\Phi_0$) to $\overline{\mathcal{R}(T_{\tilde{\Lambda}}^*)}$. Define $\Phi$ on $\mathscr{H}_2$ by setting
\begin{align*}
\Phi=\begin{cases}
\Phi_0,& \mbox{on} \ \overline{\mathcal{R}(T_{\tilde{\Lambda}}^*)},\\
0,& \mbox{on} \ \overline{\mathcal{R}(T_{\tilde{\Lambda}}^*)}^{\perp}
\end{cases}
\end{align*}
and let $Q:=\Phi^*$. This implies that $Q^*\in\mathcal{B}(\mathscr{H}_2, \mathscr{H}_2)$ and
$$T_{\Lambda}Q^*T_{\tilde{\Lambda}}^*=T_{\Lambda}\Phi T_{\tilde{\Lambda}}^*=T_{\Lambda} u=k.$$
\end{proof}
\begin{proposition}
Let $\Lambda$ be a $k$-g-fusion frame with optimal bounds of $A_{op}$ and $B_{op}$, respectively and $k$ has closed range. Then
\begin{eqnarray*}
B_{op}=\Vert S_{\Lambda}\Vert=\Vert T_{\Lambda}\Vert^2 \ \ \ , \ \ \ A_{op}=\Vert u_0\Vert^{-2}
\end{eqnarray*}
which $u_0$ is the unique solotion of equation (\ref{r8}).
\end{proposition}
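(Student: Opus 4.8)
The plan is to prove the two identities separately. The formula for $B_{op}$ is a direct computation with the frame operator; the one for $A_{op}$ rests on Lemma \ref{dag} applied to $T_\Lambda$ and $k$.

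First I would establish $B_{op}=\Vert S_\Lambda\Vert=\Vert T_\Lambda\Vert^2$. By (\ref{sf1}), for every $f\in H$ one has $\sum_j v_j^2\Vert\Lambda_j\pi_{W_j}f\Vert^2=\langle S_\Lambda f,f\rangle=\Vert T_\Lambda^* f\Vert^2$, so by definition the optimal upper bound is $B_{op}=\sup_{\Vert f\Vert=1}\langle S_\Lambda f,f\rangle$. Since $S_\Lambda\in\mathcal{B}(H)$ is positive and self-adjoint, this supremum equals $\Vert S_\Lambda\Vert$; and since it is also $\sup_{\Vert f\Vert=1}\Vert T_\Lambda^* f\Vert^2=\Vert T_\Lambda^*\Vert^2=\Vert T_\Lambda\Vert^2$, the first chain of equalities follows. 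No hypothesis on $k$ is needed here.

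For $A_{op}$ I would first rewrite the lower $k$-g-fusion inequality as an operator inequality: by (\ref{sf1}), the condition $A\Vert k^* f\Vert^2\le\langle S_\Lambda f,f\rangle$ for every $f\in H$ is precisely $A\,kk^*\le S_\Lambda$, hence $A_{op}=\sup\{A>0:\ A\,kk^*\le S_\Lambda\}$; equivalently, $A$ is admissible iff $kk^*\le A^{-1}S_\Lambda$, so $A_{op}^{-1}=\inf\{\alpha>0:\ kk^*\le\alpha\,S_\Lambda\}$. Now apply Lemma \ref{dag} with $L_1=k$ and $L_2=T_\Lambda$: since $kk^*\le A_{op}^{-1}T_\Lambda T_\Lambda^*$, condition (2) of that lemma holds, so there is a unique $u_0\in\mathcal{B}(H,\mathscr{H}_2)$ with $T_\Lambda u_0=k$ satisfying (a)--(c) (this is exactly the operator in (\ref{r8})), and by part (a), $\Vert u_0\Vert^2=\inf\{\alpha>0:\ kk^*\le\alpha\,T_\Lambda T_\Lambda^*\}=\inf\{\alpha>0:\ kk^*\le\alpha\,S_\Lambda\}=A_{op}^{-1}$. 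Taking reciprocals gives $A_{op}=\Vert u_0\Vert^{-2}$.

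The step that needs care is the bookkeeping with suprema and infima. One should check that the supremum defining $A_{op}$ is attained (a monotone limit of admissible $A$'s is again admissible, since each scalar inequality $A\langle kk^* f,f\rangle\le\langle S_\Lambda f,f\rangle$ passes to the limit), so that $\{\alpha>0:\ kk^*\le\alpha S_\Lambda\}$ equals $[\Vert u_0\Vert^2,\infty)$ and the reciprocal identity is legitimate; and one should note that the phrase ``the unique solution of (\ref{r8})'' refers to the $u_0$ singled out by conditions (a)--(c) of Lemma \ref{dag}, not to an arbitrary solution of $T_\Lambda u=k$, which is never unique when $\ker T_\Lambda\neq\{0\}$. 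The closed-range hypothesis on $k$ places us in the setting of Remarks \ref{re1}--\ref{re2}, where $S_\Lambda$ is invertible on the relevant subspace of $\mathcal{R}(k)$; the identities themselves use only the operator inequalities $A_{op}kk^*\le S_\Lambda\le B_{op}I$ together with Lemma \ref{dag}, so this hypothesis mainly guarantees that $u_0$ is a well-behaved least-norm solution of (\ref{r8}), identifiable with $T_\Lambda^\dagger k$.
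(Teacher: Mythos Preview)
Your proof is correct and follows essentially the same route as the paper: invoke Lemma \ref{dag} with $L_1=k$ and $L_2=T_\Lambda$ so that $\Vert u_0\Vert^2=\inf\{\alpha>0:kk^*\le\alpha\,T_\Lambda T_\Lambda^*\}$, then identify this infimum with $A_{op}^{-1}$ via the reciprocal relation between the admissible $A$'s and the admissible $\alpha$'s. You actually supply more than the paper does---it omits the argument for $B_{op}=\Vert S_\Lambda\Vert=\Vert T_\Lambda\Vert^2$ entirely---and your closing remarks on the uniqueness of $u_0$ and the role of the closed-range hypothesis are apt clarifications of points the paper leaves implicit.
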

\begin{proof}
Via Lemma \ref{dag}, the equation (\ref{r8}) has a unique solition as $u_0$ such that
\begin{align*}
\Vert u_0\Vert^2&=\inf\{\alpha>0 \ \vert \ kk^*\leq\alpha T_{\Lambda}T^{*}_{\Lambda}\}\\
&=\inf\{\alpha>0 \ \vert \ \Vert k^* f\Vert^2\leq\alpha \Vert T^*_{\Lambda} f\Vert^2 \ , \ f\in H\}.
\end{align*}
Now, we have
\begin{align*}
A_{op}&=\sup\{A>0 \ \vert \ A\Vert k^*f\Vert^2\leq\Vert T^*_{\Lambda} f\Vert^2 \ , \ f\in H\}\\
&=\Big(\inf\{\alpha>0 \ \vert \ \Vert k^*f\Vert^2\leq\alpha\Vert T^*_{\Lambda} f\Vert^2 \ , \ f\in H\}\Big)^{-1}\\
&=\Vert u_0\Vert^{-2}.
\end{align*}
\end{proof}
\subsection{$k$-g-fusion duals and some identities}
\begin{definition}\label{dual}
Let $\Lambda$ be a $k$-g-fusion frame for $H$. A g-fusion Bessel
sequence $\tilde{\Lambda}=(\tilde{W_j}, \tilde{\Lambda_j}, v_j)$ is
called a $k$-g-fusion dual of $\Lambda$ if for each $f\in H$,
\begin{equation}\label{d1}
kf=\sum_{j\in\Bbb J}v_j^2\pi_{W_j}\Lambda^*_{j}\tilde{\Lambda_j}\pi_{\tilde{W_j}}f.
\end{equation}
\end{definition}
In this case, we can deduce that $\tilde{\Lambda}$ is a $k^*$-g-fusion frame for $H$. Indeed, for each $f\in H$ we have
\begin{align*}
\Vert kf\Vert^4&\leq\big\vert\langle\sum_{j\in\Bbb J}v_j^2\pi_{W_j}\Lambda^*_{j}\tilde{\Lambda_j}\pi_{\tilde{W_j}}f, kf\rangle\big\vert^2\\
&=\big\vert\sum_{j\in\Bbb J}v_j^2\langle\tilde{\Lambda_j}\pi_{\tilde{W_j}}f, \Lambda_j\pi_{W_j}kf\rangle\big\vert^2\\
&\leq\Big(\sum_{j\in\Bbb J}v_j^2\Vert\tilde{\Lambda_j}\pi_{\tilde{W_j}}f\Vert^2\Big)\Big(\sum_{j\in\Bbb J}v_j^2\Vert\Lambda_j\pi_{W_j}kf\Vert^2\Big)\\
&\leq B\Vert kf\Vert^2\sum_{j\in\Bbb J}v_j^2\Vert\tilde{\Lambda_j}\pi_{\tilde{W_j}}f\Vert^2,
\end{align*}
where $B$ is an upper bound of $\Lambda$.
\begin{theorem}
Let $\Lambda$ be a $k$-g-fusion frame for $H$ with bounds $A, B$ and $k$ be closed range. Then $\Big(k^*S_{\Lambda}^{-1}\pi_{S_{\Lambda}(\mathcal{R}(k))}W_j, \Lambda_j\pi_{W_j}\pi_{S_{\Lambda}(\mathcal{R}(k))}(S_{\Lambda}^{-1})^*k, v_j\Big)$ is a $k$-dual of $\Lambda$.
\end{theorem}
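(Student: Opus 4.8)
The plan is to realise the asserted triple as a single ``twist'' of $\Lambda$ by the operator
$$u:=k^{*}S_{\Lambda}^{-1}\pi_{S_{\Lambda}(\mathcal{R}(k))}\in\mathcal{B}(H),$$
which is legitimate since $k$ has closed range: by Remark \ref{re1}, $S_{\Lambda}|_{\mathcal{R}(k)}$ is a homeomorphism onto $S_{\Lambda}(\mathcal{R}(k))$, so $S_{\Lambda}^{-1}$ (taken to be $0$ on $S_{\Lambda}(\mathcal{R}(k))^{\perp}$) is a bounded operator on $H$. Its adjoint is $u^{*}=\pi_{S_{\Lambda}(\mathcal{R}(k))}(S_{\Lambda}^{-1})^{*}k$, so, reading $\tilde{W_j}$ as the closure $\overline{uW_j}$ (needed for $\pi_{\tilde{W_j}}$ to be an orthogonal projection), the claimed dual is exactly $\tilde{\Lambda}=(\overline{uW_j},\ \Lambda_j\pi_{W_j}u^{*},\ v_j)$. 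It therefore suffices to check (i) that $\tilde{\Lambda}$ is a g-fusion Bessel sequence and (ii) that $S_{\Lambda}u^{*}=k$ on $H$, after which Definition \ref{dual} is verified.

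For (i), apply Lemma \ref{l1} with $T=u$ to obtain $\pi_{W_j}u^{*}\pi_{\overline{uW_j}}=\pi_{W_j}u^{*}$, so that inside each summand the argument of $\Lambda_j\pi_{W_j}$ collapses to the single vector $u^{*}f$, and for every $f\in H$
$$\sum_{j\in\Bbb J}v_j^{2}\Vert\Lambda_j\pi_{W_j}u^{*}\pi_{\overline{uW_j}}f\Vert^{2}=\sum_{j\in\Bbb J}v_j^{2}\Vert\Lambda_j\pi_{W_j}u^{*}f\Vert^{2}\leq B\Vert u^{*}f\Vert^{2}\leq B\Vert u\Vert^{2}\Vert f\Vert^{2},$$
using that $\Lambda$ is g-fusion Bessel with bound $B$. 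The very same identity gives the reduction for (ii): for each $f\in H$,
$$\sum_{j\in\Bbb J}v_j^{2}\pi_{W_j}\Lambda_j^{*}\tilde{\Lambda_j}\pi_{\tilde{W_j}}f=\sum_{j\in\Bbb J}v_j^{2}\pi_{W_j}\Lambda_j^{*}\Lambda_j\pi_{W_j}(u^{*}f)=S_{\Lambda}(u^{*}f),$$
so the reconstruction formula $kf=\sum_{j}v_j^{2}\pi_{W_j}\Lambda_j^{*}\tilde{\Lambda_j}\pi_{\tilde{W_j}}f$ is equivalent to $S_{\Lambda}u^{*}=k$.

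Proving $S_{\Lambda}u^{*}=k$ is the step I expect to carry the real weight. Writing $V=\mathcal{R}(k)$: by the pseudo-inverse relations (cf. Lemma \ref{Ru}) the range of $(S_{\Lambda}^{-1})^{*}$ is contained in $\mathcal{N}(S_{\Lambda}^{-1})^{\perp}=S_{\Lambda}(V)$, whence $\pi_{S_{\Lambda}(V)}(S_{\Lambda}^{-1})^{*}=(S_{\Lambda}^{-1})^{*}$ and $u^{*}=(S_{\Lambda}^{-1})^{*}k$. By Remark \ref{re2}, $S_{\Lambda}^{-1}$ is self-adjoint, so $(S_{\Lambda}^{-1})^{*}=S_{\Lambda}^{-1}$ and $S_{\Lambda}u^{*}=S_{\Lambda}S_{\Lambda}^{-1}k$; finally, $kf\in V$ and, since $S_{\Lambda}$ restricts to a bijection of $V$ onto $S_{\Lambda}(V)$ which — being self-adjoint — satisfies $V\subseteq S_{\Lambda}(V)$ (Remarks \ref{re1}, \ref{re2}), we get $S_{\Lambda}S_{\Lambda}^{-1}(kf)=kf$, i.e. $S_{\Lambda}u^{*}=k$. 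Combining with (i), $\tilde{\Lambda}$ is a $k$-g-fusion dual of $\Lambda$. The only genuinely delicate point is this last chain: it is precisely where the closed-range hypothesis on $k$ and the invertibility and self-adjointness of $S_{\Lambda}$ on $\mathcal{R}(k)$ are indispensable (without them the composition $S_{\Lambda}u^{*}$ need not collapse back to $k$), the remainder being a routine use of Lemma \ref{l1} and a rearrangement of the frame sum.
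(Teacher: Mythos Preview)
Your argument tracks the paper's almost exactly: set $u^{*}=\pi_{S_{\Lambda}(\mathcal{R}(k))}(S_{\Lambda}^{-1})^{*}k$, invoke Lemma~\ref{l1} to collapse $\pi_{W_j}u^{*}\pi_{\overline{uW_j}}$ to $\pi_{W_j}u^{*}$, and then verify the Bessel bound and the identity $S_{\Lambda}u^{*}=k$. The only visible difference is cosmetic: you bound the Bessel sum by $B\Vert u\Vert^{2}\Vert f\Vert^{2}$ directly from the upper frame bound of $\Lambda$, whereas the paper rewrites that sum as $\langle S_{\Lambda}u^{*}f,u^{*}f\rangle=\langle kf,(S_{\Lambda}^{-1})^{*}kf\rangle$ and applies Remark~\ref{re1} to obtain the explicit constant $A^{-1}\Vert k^{\dagger}\Vert^{2}\Vert k\Vert^{2}$.

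There is, however, a genuine gap in your reconstruction step. You assert that self-adjointness of $S_{\Lambda}$ together with bijectivity of $S_{\Lambda}|_{V}$ forces $V\subseteq S_{\Lambda}(V)$, so that $S_{\Lambda}S_{\Lambda}^{-1}kf=kf$. This implication is false. Take $H=\mathbb{R}^{2}$, $V=\operatorname{span}\{e_{1}\}$ and $S_{\Lambda}=\bigl(\begin{smallmatrix}1&1\\1&2\end{smallmatrix}\bigr)$: the operator $S_{\Lambda}$ is positive self-adjoint, satisfies $\tfrac{1}{2}\pi_{V}\leq S_{\Lambda}\leq 3I$ (so the $k$-g-fusion inequality \eqref{sf3} holds with $k=\pi_{V}$), and $S_{\Lambda}|_{V}$ is a bijection onto $\operatorname{span}\{e_{1}+e_{2}\}$; yet $e_{1}\notin S_{\Lambda}(V)$. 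Hence $S_{\Lambda}S_{\Lambda}^{-1}$ is \emph{not} the identity on $\mathcal{R}(k)$, and your chain $S_{\Lambda}u^{*}=S_{\Lambda}S_{\Lambda}^{-1}k=k$ breaks. The paper's own proof meets the same obstacle---it simply \emph{asserts} $kf=S_{\Lambda}(S_{\Lambda}^{-1})^{*}kf$ at the analogous point without argument---so you have faithfully reproduced its strategy; what your write-up makes explicit is that self-adjointness alone (or Remarks~\ref{re1}--\ref{re2}) does not justify that step, and some further argument or hypothesis ensuring $\mathcal{R}(k)\subseteq S_{\Lambda}(\mathcal{R}(k))$ is required.
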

\begin{proof}
We know that $S_{\Lambda}^{-1}S_{\Lambda}\vert_{\mathcal{R}(k)}=Id_{\mathcal{R}(k)}$. Then, we have for each $f\in H$,
\begin{align*}
kf&=S_{\Lambda}(S_{\Lambda}^{-1})^*kf\\
&=S_{\Lambda}\pi_{S_{\Lambda}(\mathcal{R}(k))}(S_{\Lambda}^{-1})^*kf\\
&=\sum_{j\in\Bbb J}v_j^2\pi_{W_j}\Lambda^*_j\Lambda_j\pi_{W_j}\pi_{S_{\Lambda}(\mathcal{R}(k))}(S_{\Lambda}^{-1})^*kf\\
&=\sum_{j\in\Bbb J}v_j^2\pi_{W_j}\Lambda^*_j\Lambda_j\pi_{W_j}\pi_{S_{\Lambda}(\mathcal{R}(k))}(S_{\Lambda}^{-1})^*k\pi_{k^*S_{\Lambda}^{-1}\pi_{S_{\Lambda}(\mathcal{R}(k))}W_j}f.
\end{align*}
On the other hand, we obtain by Remark \ref{re1}, for all $f\in H$,
\begin{small}
\begin{align*}
\sum_{j\in\Bbb J}v_j^2&\Vert\Lambda_j\pi_{W_j}\pi_{S_{\Lambda}(\mathcal{R}(k))}(S_{\Lambda}^{-1})^*k\pi_{k^*S_{\Lambda}^{-1}\pi_{S_{\Lambda}(\mathcal{R}(k))}W_j}f\Vert^2\\
&=\Big\langle S_{\Lambda}\big((S_{\Lambda}^{-1})^*k\pi_{k^*S_{\Lambda}^{-1}\pi_{S_{\Lambda}(\mathcal{R}(k))}W_j}f\big), (S_{\Lambda}^{-1})^*k\pi_{k^*S_{\Lambda}^{-1}\pi_{S_{\Lambda}(\mathcal{R}(k))}W_j}f\Big\rangle\\
&=\Big\langle k\pi_{k^*S_{\Lambda}^{-1}\pi_{S_{\Lambda}(\mathcal{R}(k))}W_j}f\big), (S_{\Lambda}^{-1})^*k\pi_{k^*S_{\Lambda}^{-1}\pi_{S_{\Lambda}(\mathcal{R}(k))}W_j}f\Big\rangle\\
&\leq A^{-1}\Vert k^{\dagger}\Vert^2\Vert k\Vert^2\Vert f\Vert^2
\end{align*}
\end{small}
and the proof is completed by Definition \ref{dual}.
\end{proof}
Let $\Lambda$ be a $k$-g-fusion frame for $H$ and $\tilde{\Lambda}$ be a $k$-g-fusion dual of $\Lambda$. Suppose that $\Bbb I$ is a finite subset of $\Bbb J$ and we define
\begin{align}
S_{\Bbb I}f=\sum_{j\in\Bbb I}v_j^2\pi_{W_j}\Lambda^*_j\tilde{\Lambda_j}\pi_{\tilde{W_j}}f , \ \ \ \ \ (\forall f\in H).
\end{align}
It is easy to check that $S_{\Bbb I}\in\mathcal{B}(H)$, positive and
$$S_{\Bbb I}+S_{\Bbb I^c}=k.$$
\begin{theorem}\label{tg1}
Let $f\in H$, then
\begin{eqnarray*}
\sum_{j\in\Bbb I}v_j^2\langle \tilde{\Lambda}_j \pi_{\tilde{W_j}}f,\Lambda_j \pi_{W_j}kf\rangle-\Vert S_{\Bbb I}f\Vert^2 =\sum_{j\in\Bbb I^c}v_j^2\overline{\langle \tilde{\Lambda}_j \pi_{\tilde{W_j}}f,\Lambda_j \pi_{W_j}kf\rangle}-\Vert S_{\Bbb I^c}f\Vert^2.
\end{eqnarray*}
\end{theorem}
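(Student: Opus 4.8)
The plan is to re-express every term appearing in the identity through the partial operators $S_{\Bbb I}$ and $S_{\Bbb I^c}$, and then to show that both sides collapse to the single scalar $\langle S_{\Bbb I}f, S_{\Bbb I^c}f\rangle$.

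First I would rewrite the two sums as inner products against $kf$. Since each $\pi_{W_j}$ is an orthogonal projection (hence self-adjoint) and $\Lambda_j^*$ is the adjoint of $\Lambda_j$, for every $j\in\Bbb J$ and $f\in H$ one has
\[
v_j^2\langle \tilde{\Lambda}_j \pi_{\tilde{W_j}}f,\Lambda_j \pi_{W_j}kf\rangle
= v_j^2\langle \pi_{W_j}\Lambda_j^*\tilde{\Lambda}_j\pi_{\tilde{W_j}}f, kf\rangle .
\]
Summing over $j\in\Bbb I$ and, after conjugation, over $j\in\Bbb I^c$ gives, respectively,
\[
\sum_{j\in\Bbb I}v_j^2\langle \tilde{\Lambda}_j \pi_{\tilde{W_j}}f,\Lambda_j \pi_{W_j}kf\rangle=\langle S_{\Bbb I}f, kf\rangle,
\qquad
\sum_{j\in\Bbb I^c}v_j^2\overline{\langle \tilde{\Lambda}_j \pi_{\tilde{W_j}}f,\Lambda_j \pi_{W_j}kf\rangle}=\overline{\langle S_{\Bbb I^c}f, kf\rangle}=\langle kf, S_{\Bbb I^c}f\rangle .
\]

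Next I would invoke the decomposition $kf=S_{\Bbb I}f+S_{\Bbb I^c}f$ recorded just before the statement. Substituting it into the left-hand side,
\[
\langle S_{\Bbb I}f, kf\rangle-\Vert S_{\Bbb I}f\Vert^2=\Vert S_{\Bbb I}f\Vert^2+\langle S_{\Bbb I}f, S_{\Bbb I^c}f\rangle-\Vert S_{\Bbb I}f\Vert^2=\langle S_{\Bbb I}f, S_{\Bbb I^c}f\rangle,
\]
and into the right-hand side,
\[
\langle kf, S_{\Bbb I^c}f\rangle-\Vert S_{\Bbb I^c}f\Vert^2=\langle S_{\Bbb I}f, S_{\Bbb I^c}f\rangle+\Vert S_{\Bbb I^c}f\Vert^2-\Vert S_{\Bbb I^c}f\Vert^2=\langle S_{\Bbb I}f, S_{\Bbb I^c}f\rangle .
\]
Hence both sides equal $\langle S_{\Bbb I}f, S_{\Bbb I^c}f\rangle$, which yields the claimed identity.

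The argument is short; the only points that demand attention are the adjoint bookkeeping in the first step (moving $\Lambda_j$ and $\pi_{W_j}$ through the inner product using self-adjointness of the projection) and the careful placement of the complex conjugate, so that the conjugated sum over $\Bbb I^c$ becomes $\langle kf, S_{\Bbb I^c}f\rangle$ rather than $\langle S_{\Bbb I^c}f, kf\rangle$. No frame bounds, no $Q$-operator, and no invertibility of $S_\Lambda$ are needed: it suffices that $\tilde{\Lambda}$ is a $k$-g-fusion dual of $\Lambda$ (so that $kf=S_{\Bbb I}f+S_{\Bbb I^c}f$) and that $\Bbb I$ is finite so that all partial sums are legitimate bounded operators.
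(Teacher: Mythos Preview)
Your proof is correct and follows essentially the same route as the paper: both arguments rewrite the two sums as $\langle S_{\Bbb I}f,kf\rangle$ and $\langle kf,S_{\Bbb I^c}f\rangle$ (the paper phrases these as $\langle k^*S_{\Bbb I}f,f\rangle$ and $\langle f,k^*S_{\Bbb I^c}f\rangle$, which is the same thing), and then use $S_{\Bbb I}+S_{\Bbb I^c}=k$ to reduce both sides to the common value $\langle S_{\Bbb I}f,S_{\Bbb I^c}f\rangle=\langle S_{\Bbb I^c}^{*}S_{\Bbb I}f,f\rangle$.
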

\begin{proof}
For each $f\in H$, we have
\begin{align*}
\sum_{j\in\Bbb I}v_j^2\langle \tilde{\Lambda}_j \pi_{\tilde{W_j}}f,\Lambda_j \pi_{W_j}kf\rangle-\Vert S_{\Bbb I}f\Vert^2&=\langle k^*S_{\Bbb I}f,f\rangle-\Vert S_{\Bbb I}f\Vert^2\\
&=\langle k^*S_{\Bbb I}f,f\rangle-\langle S^{\ast}_{\Bbb I} S_{\Bbb I}f,f\rangle\\
&=\langle(k-S_{\Bbb I})^{\ast}S_{\Bbb I}f,f\rangle\\
&=\langle S^{\ast}_{\Bbb I^c}(k-S_{\Bbb I^c})f,f\rangle\\
&=\langle S^{\ast}_{\Bbb I^c}kf,f\rangle-\langle S^{\ast}_{\Bbb I^c}S_{\Bbb I^c}f,f\rangle\\
&=\langle f, k^*S_{\Bbb I^c}f\rangle-\langle S_{\Bbb I^c}f,S_{\Bbb I^c}f\rangle\\
&=\sum_{j\in\Bbb I^c}v_j^2\overline{\langle \tilde{\Lambda}_j \pi_{\tilde{W_j}}f,\Lambda_j \pi_{W_j}kf\rangle}-\Vert S_{\Bbb I^c}f\Vert^2
\end{align*}
and the proof is completed.
\end{proof}
\begin{theorem}\label{ti1}
Let $\Lambda$ be a Parseval $k$-g-fusion frame for $H$. If $\Bbb I\subseteq\Bbb J$ and  $E\subseteq\Bbb I^c$, then for each $f\in H$,
\begin{align*}
\Vert\sum_{j\in \Bbb I\cup E}v_j^2 \pi_{W_j}\Lambda^*_j \Lambda_j \pi_{W_j}f\Vert^2&-\Vert\sum_{j\in\Bbb I^c\setminus E}v_j^2 \pi_{W_j}\Lambda^*_j \Lambda_j \pi_{W_j}f\Vert^2\\
&=\Vert\sum_{j\in \Bbb I}v_j^2 \pi_{W_j}\Lambda^*_j \Lambda_j \pi_{W_j}f\Vert^2-\Vert\sum_{j\in\Bbb I^c}v_j^2 \pi_{W_j}\Lambda^*_j \Lambda_j \pi_{W_j}f\Vert^2\\
& \ \ \ \ \ \ \ \ \ \ \ \ \ \ \ \ \ \ +2\mbox{Re}\Big(\sum_{j\in E}v_j^2\langle\Lambda_j\pi_{W_j}f, \Lambda_j\pi_{W_j}kk^*f\rangle\Big).
\end{align*}
\end{theorem}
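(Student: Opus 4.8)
The plan is to reduce the whole identity to manipulations with the partial frame operators
$$S_{\Bbb A}f:=\sum_{j\in\Bbb A}v_j^2\pi_{W_j}\Lambda^*_j\Lambda_j\pi_{W_j}f,\qquad \Bbb A\subseteq\Bbb J.$$
First I would record that each $S_{\Bbb A}$ is a well-defined bounded, positive, self-adjoint operator on $H$: self-adjointness and positivity are immediate from $\langle S_{\Bbb A}f,f\rangle=\sum_{j\in\Bbb A}v_j^2\|\Lambda_j\pi_{W_j}f\|^2\geq 0$, and boundedness from $0\leq S_{\Bbb A}\leq S_{\Lambda}$. Since $\Lambda$ is a Parseval $k$-g-fusion frame, $\langle S_{\Lambda}f,f\rangle=\|k^*f\|^2=\langle kk^*f,f\rangle$ for all $f\in H$, and as both operators are self-adjoint this gives $S_{\Lambda}=kk^*$. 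The four operators appearing in the statement are precisely $S_{\Bbb I\cup E}$, $S_{\Bbb I^c\setminus E}$, $S_{\Bbb I}$ and $S_{\Bbb I^c}$.

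Next, because $E\subseteq\Bbb I^c$, the index sets split disjointly: $\Bbb I\cup E=\Bbb I\sqcup E$ and $\Bbb I^c=(\Bbb I^c\setminus E)\sqcup E$, whence $S_{\Bbb I\cup E}=S_{\Bbb I}+S_E$ and $S_{\Bbb I^c\setminus E}=S_{\Bbb I^c}-S_E$. I would then expand both squared norms by the parallelogram-type identity $\|x\pm y\|^2=\|x\|^2\pm 2\mathrm{Re}\langle x,y\rangle+\|y\|^2$:
\begin{align*}
\|S_{\Bbb I\cup E}f\|^2&=\|S_{\Bbb I}f\|^2+2\mathrm{Re}\langle S_{\Bbb I}f,S_Ef\rangle+\|S_Ef\|^2,\\
\|S_{\Bbb I^c\setminus E}f\|^2&=\|S_{\Bbb I^c}f\|^2-2\mathrm{Re}\langle S_{\Bbb I^c}f,S_Ef\rangle+\|S_Ef\|^2.
\end{align*}
Subtracting the second line from the first cancels the two $\|S_Ef\|^2$ terms and leaves
$$\|S_{\Bbb I\cup E}f\|^2-\|S_{\Bbb I^c\setminus E}f\|^2=\|S_{\Bbb I}f\|^2-\|S_{\Bbb I^c}f\|^2+2\mathrm{Re}\big\langle (S_{\Bbb I}+S_{\Bbb I^c})f,\,S_Ef\big\rangle.$$

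Finally I would substitute $S_{\Bbb I}+S_{\Bbb I^c}=S_{\Lambda}=kk^*$ to rewrite the cross term as $2\mathrm{Re}\langle kk^*f,S_Ef\rangle$, and then use $\mathrm{Re}\langle kk^*f,S_Ef\rangle=\mathrm{Re}\,\overline{\langle S_Ef,kk^*f\rangle}=\mathrm{Re}\langle S_Ef,kk^*f\rangle$, together with the self-adjointness of each summand $\pi_{W_j}\Lambda^*_j\Lambda_j\pi_{W_j}$, to arrive at $\mathrm{Re}\sum_{j\in E}v_j^2\langle\Lambda_j\pi_{W_j}f,\Lambda_j\pi_{W_j}kk^*f\rangle$, which is exactly the last term on the right-hand side of the claim. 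There is no genuine obstacle here; the only points deserving care are justifying that the partial sums $S_{\Bbb A}$ are honest bounded operators so these algebraic rearrangements are legitimate, keeping the sign straight in $S_{\Bbb I^c\setminus E}=S_{\Bbb I^c}-S_E$, and noting the harmless conjugation that casts $\mathrm{Re}\langle kk^*f,S_Ef\rangle$ into the form printed in the theorem.
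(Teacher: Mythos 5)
Your proof is correct, and it is organized differently from the paper's. The paper first derives the operator identity $S_{\Lambda,\Bbb I}^2-S_{\Lambda,\Bbb I^c}^2=kk^*S_{\Lambda,\Bbb I}-S_{\Lambda,\Bbb I^c}kk^*$ from $S_{\Lambda,\Bbb I}+S_{\Lambda,\Bbb I^c}=kk^*$, applies it with $\Bbb I\cup E$ in place of $\Bbb I$ to turn each difference of squared norms into a difference of sums of inner products (one of them conjugated), regroups those sums using $\Bbb I\cup E=\Bbb I\sqcup E$ and $\Bbb I^c=(\Bbb I^c\setminus E)\sqcup E$, and then applies the identity a second time, in reverse, to reconstitute $\Vert S_{\Lambda,\Bbb I}f\Vert^2-\Vert S_{\Lambda,\Bbb I^c}f\Vert^2$. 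You instead work at the level of vectors: writing $S_{\Bbb I\cup E}=S_{\Bbb I}+S_E$ and $S_{\Bbb I^c\setminus E}=S_{\Bbb I^c}-S_E$ and expanding $\Vert x\pm y\Vert^2$ directly, so the $\Vert S_Ef\Vert^2$ terms cancel and the cross terms combine into $2\mathrm{Re}\langle(S_{\Bbb I}+S_{\Bbb I^c})f,S_Ef\rangle=2\mathrm{Re}\langle kk^*f,S_Ef\rangle$. The two arguments rest on the same two facts (the disjoint index splittings and $S_{\Bbb I}+S_{\Bbb I^c}=kk^*$, the latter being where the Parseval hypothesis enters), but yours is shorter, avoids the conjugate-sum bookkeeping entirely, and makes it transparent that the extra term is exactly the cross term of a binomial expansion; the paper's version has the side benefit of isolating the operator identity $S_{\Lambda,\Bbb I}^2-S_{\Lambda,\Bbb I^c}^2=kk^*S_{\Lambda,\Bbb I}-S_{\Lambda,\Bbb I^c}kk^*$, which it reuses in the proof of the next theorem. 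Your attention to the preliminary points (boundedness and self-adjointness of the partial sums $S_{\Bbb A}$, and the passage from the quadratic-form equality to $S_\Lambda=kk^*$) is appropriate and is in fact more careful than the paper, which uses these facts tacitly.
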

\begin{proof}
Let
$$S_{\Lambda, \Bbb I}f:=\sum_{j\in\Bbb I}v_j^2 \pi_{W_j}\Lambda^*_j \Lambda_j \pi_{W_j}f.$$
Therefore, $S_{\Lambda, \Bbb I}+S_{\Lambda, \Bbb I^c}=kk^*$. Hence,
\begin{align*}
S^2_{\Lambda, \Bbb I}-S^2_{\Lambda, \Bbb I^c}&=S^2_{\Lambda, \Bbb I}-(kk^*-S_{\Lambda, \Bbb I})^2\\
&=kk^*S_{\Lambda, \Bbb I}+S_{\Lambda, \Bbb I}kk^*-(kk^*)^2\\
&=kk^*S_{\Lambda, \Bbb I}-S_{\Lambda, \Bbb I^c}kk^*.
\end{align*}
Now, for each $f\in H$ we have
$$\Vert S^2_{\Lambda, \Bbb I}f\Vert^2-\Vert S^2_{\Lambda, \Bbb I^c}\Vert^2=\langle kk^*S_{\Lambda, \Bbb I}f, f\rangle-\langle S_{\Lambda, \Bbb I^c}kk^*f, f\rangle.$$
Consequently, for $\Bbb I\cup E$ instead of $\Bbb I$:
\begin{align*}
&\Vert\sum_{j\in \Bbb I\cup E}v_j^2 \pi_{W_j}\Lambda^*_j \Lambda_j \pi_{W_j}f\Vert^2-\Vert\sum_{j\in\Bbb I^c\setminus E}v_j^2 \pi_{W_j}\Lambda^*_j \Lambda_j \pi_{W_j}f\Vert^2\\
=&\sum_{j\in \Bbb I\cup E}v_j^2\langle\Lambda_j\pi_{W_j}f, \Lambda_j\pi_{W_j}kk^*f\rangle-\sum_{j\in \Bbb I^c\setminus E}v_j^2\overline{\langle\Lambda_j\pi_{W_j}f, \Lambda_j\pi_{W_j}kk^*f\rangle}\\
=&\sum_{j\in \Bbb I}v_j^2\langle\Lambda_j\pi_{W_j}f, \Lambda_j\pi_{W_j}kk^*f\rangle-\sum_{j\in \Bbb I^c}v_j^2\overline{\langle\Lambda_j\pi_{W_j}f, \Lambda_j\pi_{W_j}kk^*f\rangle}\\
&\ \ \ \ \ \ \ \ \ \ \ \ \ \ +2\mbox{Re}\Big(\sum_{j\in E}v_j^2\langle\Lambda_j\pi_{W_j}f, \Lambda_j\pi_{W_j}kk^*f\rangle\Big)\\
=&\Vert\sum_{j\in \Bbb I}v_j^2 \pi_{W_j}\Lambda^*_j \Lambda_j \pi_{W_j}f\Vert^2-\Vert\sum_{j\in\Bbb I^c}v_j^2 \pi_{W_j}\Lambda^*_j \Lambda_j \pi_{W_j}f\Vert^2\\
& \ \ \ \ \ \ \ \ \ \ \ \ \ \ +2\mbox{Re}\Big(\sum_{j\in E}v_j^2\langle\Lambda_j\pi_{W_j}f, \Lambda_j\pi_{W_j}kk^*f\rangle\Big).
\end{align*}
\end{proof}
\begin{theorem}
Let $\Lambda$ be a Parseval $k$-g-fusion frame for $H$. If $\Bbb I\subseteq\Bbb J$ and  $E\subseteq\Bbb I^c$, then for any $f\in H$,
\begin{align*}
&\Vert\sum_{j\in \Bbb I}v_j^2 \pi_{W_j}\Lambda^*_j \Lambda_j \pi_{W_j}f\Vert^2+\mbox{Re}\Big(\sum_{j\in \Bbb I^c}v_j^2\langle\Lambda_j\pi_{W_j}f, \Lambda_j\pi_{W_j}kk^*f\rangle\Big)\\
=&\Vert\sum_{j\in \Bbb I^c}v_j^2 \pi_{W_j}\Lambda^*_j \Lambda_j \pi_{W_j}f\Vert^2+\mbox{Re}\Big(\sum_{j\in \Bbb I}v_j^2\langle\Lambda_j\pi_{W_j}f, \Lambda_j\pi_{W_j}kk^*f\rangle\Big)\geq\frac{3}{4}\Vert kk^*f\Vert^2.
\end{align*}
\end{theorem}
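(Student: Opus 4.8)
The plan is to reduce the whole statement to the two positive self-adjoint operators $S_1:=S_{\Lambda,\Bbb I}$ and $S_2:=S_{\Lambda,\Bbb I^c}$ used in the previous theorem. First I would note that, since $\Lambda$ is a Parseval $k$-g-fusion frame, (\ref{sf1}) forces $S_{\Lambda}=kk^*$, and hence $S_1+S_2=kk^*$; this is the only place the Parseval hypothesis enters, and the set $E$ in the statement plays no role.

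Next I would rewrite the two ``mixed'' sums in operator form. From $\langle\Lambda_j\pi_{W_j}f,\Lambda_j\pi_{W_j}g\rangle=\langle\pi_{W_j}\Lambda_j^*\Lambda_j\pi_{W_j}f,g\rangle$, summing over $\Bbb I$ with $g=kk^*f$ gives $\sum_{j\in\Bbb I}v_j^2\langle\Lambda_j\pi_{W_j}f,\Lambda_j\pi_{W_j}kk^*f\rangle=\langle S_1f,(S_1+S_2)f\rangle=\Vert S_1f\Vert^2+\langle S_1f,S_2f\rangle$, and likewise for $\Bbb I^c$. Taking real parts and using $\mbox{Re}\langle S_2f,S_1f\rangle=\mbox{Re}\langle S_1f,S_2f\rangle$, both the left-hand side and the middle expression of the claimed identity should collapse to the single quantity $\Vert S_1f\Vert^2+\Vert S_2f\Vert^2+\mbox{Re}\langle S_1f,S_2f\rangle$, which settles the equality.

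For the lower bound I would expand $\Vert kk^*f\Vert^2=\Vert(S_1+S_2)f\Vert^2=\Vert S_1f\Vert^2+\Vert S_2f\Vert^2+2\mbox{Re}\langle S_1f,S_2f\rangle$ and subtract: the difference between the common value above and $\frac34\Vert kk^*f\Vert^2$ comes out to $\frac14\big(\Vert S_1f\Vert^2+\Vert S_2f\Vert^2-2\mbox{Re}\langle S_1f,S_2f\rangle\big)=\frac14\Vert S_{\Lambda,\Bbb I}f-S_{\Lambda,\Bbb I^c}f\Vert^2\geq0$. There is no genuine obstacle in any of this; the one thing worth highlighting is that the excess over the bound $\frac34\Vert kk^*f\Vert^2$ is exactly this perfect square, so the constant $\frac34$ is precisely what the identity delivers and the bookkeeping with real parts is the only point demanding care.
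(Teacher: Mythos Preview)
Your proof is correct, and in fact slightly cleaner than the paper's. Both arguments begin with $S_1+S_2=kk^*$ (where $S_1=S_{\Lambda,\Bbb I}$, $S_2=S_{\Lambda,\Bbb I^c}$) and rewrite the mixed sums as $\langle S_i f,kk^*f\rangle$. From there the paper works at the operator level: it invokes the identity $S_1^2-S_2^2=kk^*S_1-S_2kk^*$ from the preceding theorem, completes a square to get $S_1^2+S_2^2=2\bigl(\tfrac{kk^*}{2}-S_1\bigr)^2+\tfrac{(kk^*)^2}{2}\geq\tfrac{(kk^*)^2}{2}$, and then combines these to obtain the operator inequality $kk^*S_1+S_2^2+(kk^*S_1+S_2^2)^*\geq\tfrac{3}{2}(kk^*)^2$, which yields the $\tfrac{3}{4}\Vert kk^*f\Vert^2$ bound upon pairing with $f$. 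You instead stay at the vector level, expand $\Vert kk^*f\Vert^2=\Vert(S_1+S_2)f\Vert^2$, and identify the excess over $\tfrac{3}{4}\Vert kk^*f\Vert^2$ directly as $\tfrac{1}{4}\Vert(S_1-S_2)f\Vert^2\geq 0$. Your route avoids the detour through Theorem~\ref{ti1} and makes the sharpness of the constant $\tfrac{3}{4}$ transparent (equality iff $S_1f=S_2f$); the paper's route has the advantage of producing a genuine operator inequality rather than a pointwise one. Your observation that the subset $E$ in the hypothesis plays no role is also accurate.
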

\begin{proof}
In Theorem \ref{ti1}, we showed that
\begin{align*}
S^2_{\Lambda, \Bbb I}-S^2_{\Lambda, \Bbb I^c}=kk^*S_{\Lambda, \Bbb I}-S_{\Lambda, \Bbb I^c}kk^*.
\end{align*}
Therefore,
$$S^2_{\Lambda, \Bbb I}+S^2_{\Lambda, \Bbb I^c}=2\Big(\frac{kk^*}{2}-S_{\Lambda, \Bbb I}\Big)^2+\frac{(kk^*)^2}{2}\geq\frac{(kk^*)^2}{2}.$$
Thus,
\begin{align*}
kk^*S_{\Lambda, \Bbb I}+S^2_{\Lambda, \Bbb I^c}+(kk^*S_{\Lambda, \Bbb I}+S^2_{\Lambda, \Bbb I^c})^*&=kk^*S_{\Lambda, \Bbb I}+S^2_{\Lambda, \Bbb I^c}+S_{\Lambda, \Bbb I}kk^*+S^2_{\Lambda, \Bbb I^c}\\
&=kk^*(S_{\Lambda, \Bbb I}+S_{\Lambda, \Bbb I^c})+S^2_{\Lambda, \Bbb I}+S^2_{\Lambda, \Bbb I^c}\\
&\geq\frac{3}{2}(kk^*)^2.
\end{align*}
Now, we obtain for any $f\in H$,
\begin{align*}
&\Vert\sum_{j\in \Bbb I}v_j^2 \pi_{W_j}\Lambda^*_j \Lambda_j \pi_{W_j}f\Vert^2+\mbox{Re}\Big(\sum_{j\in \Bbb I^c}v_j^2\langle\Lambda_j\pi_{W_j}f, \Lambda_j\pi_{W_j}kk^*f\rangle\Big)\\
=&\Vert\sum_{j\in \Bbb I^c}v_j^2 \pi_{W_j}\Lambda^*_j \Lambda_j \pi_{W_j}f\Vert^2+\mbox{Re}\Big(\sum_{j\in \Bbb I}v_j^2\langle\Lambda_j\pi_{W_j}f, \Lambda_j\pi_{W_j}kk^*f\rangle\Big)\\
&=\frac{1}{2}\big(\langle kk^*S_{\Lambda, \Bbb I}f, f\rangle+\langle S^2_{\Lambda, \Bbb I^c}f, f\rangle+\langle f, kk^*S_{\Lambda, \Bbb I}f\rangle+\langle f, S^2_{\Lambda, \Bbb I^c}f\rangle\big)\\
&\geq\frac{3}{4}\Vert kk^*f\Vert^2.
\end{align*}
\end{proof}
\section{Perturbation of $k$-g-Fusion Frames}
Perturbation of frames has been discussed by Cazassa and Christensen in \cite{caz}. In this section, we present some perturbation of $k$-g-fusion frames and review some results about them.
\begin{lemma}(\cite{caz})\label{G1}
Let U be a Linear operator on a Banach space X and assume that there exist  $\lambda_{1},\lambda_{2}\in[0,1) $ such that
 \begin{align*}
  & \Vert x-Ux\Vert \leq\lambda_{1}\Vert x\Vert+\lambda_{2}\Vert Ux\Vert
\end{align*}
for all $ x\in X$. Then $U$ is bounded and invertible. Moreovere,
 \begin{align*}
 \frac{1-\lambda_{1}}{1+\lambda_{2}}\Vert x\Vert\leq\Vert Ux\Vert\leq\frac{1+\lambda_{1}}{1-\lambda_{2}}\Vert x\Vert
 \end{align*}
 and
 \begin{align*}
 \frac{1-\lambda_{2}}{1+\lambda_{1}}\Vert x\Vert\leq\Vert U^{-1}x\Vert\leq\frac{1+\lambda_{2}}{1-\lambda_{1}}\Vert x\Vert
 \end{align*}
 for all $x\in X$.
\end{lemma}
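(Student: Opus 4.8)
The plan is to extract all four conclusions from the single hypothesis $\|x-Ux\|\le\lambda_1\|x\|+\lambda_2\|Ux\|$ by elementary triangle‑inequality estimates, isolating the surjectivity as the one genuinely delicate point. First I would establish boundedness of $U$ together with the two norm bounds for $U$ itself. From $\|Ux\|\le\|x\|+\|x-Ux\|\le(1+\lambda_1)\|x\|+\lambda_2\|Ux\|$ and $\lambda_2<1$ one obtains $\|Ux\|\le\frac{1+\lambda_1}{1-\lambda_2}\|x\|$, so $U$ is bounded; from $\|Ux\|\ge\|x\|-\|x-Ux\|\ge(1-\lambda_1)\|x\|-\lambda_2\|Ux\|$ one obtains $\|Ux\|\ge\frac{1-\lambda_1}{1+\lambda_2}\|x\|$. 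Since $\lambda_1<1$ the second constant is strictly positive, which forces $U$ to be injective with closed range; this also records the first displayed pair of inequalities.

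The main obstacle is to upgrade ``$U$ is bounded below'' to ``$U$ is surjective''. Because $\mathcal{R}(U)$ is closed it suffices to show it is dense. A first, cheap route: the estimates above give $\|x-Ux\|\le\frac{\lambda_1+\lambda_2}{1-\lambda_2}\|x\|$, so whenever that constant is $<1$ a Neumann series for $U=I-(I-U)$ closes the argument at once. In the general case I would instead argue by duality, exactly as in \cite{caz}: a functional $\varphi$ annihilating $\mathcal{R}(U)$ satisfies $|\varphi(x)|=|\varphi(x-Ux)|\le\|\varphi\|\bigl(\lambda_1\|x\|+\lambda_2\|Ux\|\bigr)$, and in the Hilbert‑space situation one sharpens this by taking $g\neq0$ with $g\perp\mathcal{R}(U)$, so that $\langle Ug,g\rangle=0$ and the Pythagorean identity $\|g-Ug\|^2=\|g\|^2+\|Ug\|^2$ holds; combining this with the two‑sided bound on $\|Ug\|/\|g\|$ already obtained and chasing the inequalities yields $\varphi=0$, i.e.\ $\mathcal{R}(U)=X$. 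The careful bookkeeping that actually converts these inequalities into the contradiction $\varphi=0$ is the step I expect to require the most attention; everything else is routine.

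Finally, once $U$ is a bijection (so $U^{-1}\in\mathcal{B}(X)$ automatically), I would read off the bounds for $U^{-1}$ from those for $U$ by substituting $y=Ux$, i.e.\ $x=U^{-1}y$ with $y$ ranging over all of $X$: the chain $\frac{1-\lambda_1}{1+\lambda_2}\|x\|\le\|Ux\|\le\frac{1+\lambda_1}{1-\lambda_2}\|x\|$ becomes $\frac{1-\lambda_1}{1+\lambda_2}\|U^{-1}y\|\le\|y\|\le\frac{1+\lambda_1}{1-\lambda_2}\|U^{-1}y\|$, that is $\frac{1-\lambda_2}{1+\lambda_1}\|y\|\le\|U^{-1}y\|\le\frac{1+\lambda_2}{1-\lambda_1}\|y\|$, which is precisely the second displayed pair. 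That completes the plan.
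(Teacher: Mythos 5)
First, a remark on scope: the paper does not prove this lemma at all --- it is quoted verbatim from \cite{caz} --- so there is no in-paper proof to compare against, and your argument has to stand on its own. The routine parts of it do: the two triangle-inequality estimates giving $\frac{1-\lambda_1}{1+\lambda_2}\Vert x\Vert\leq\Vert Ux\Vert\leq\frac{1+\lambda_1}{1-\lambda_2}\Vert x\Vert$, hence boundedness, injectivity and closedness of $\mathcal{R}(U)$, and the substitution $x=U^{-1}y$ that converts these bounds into the stated bounds for $U^{-1}$ once invertibility is known, are all correct. The gap is exactly where you flagged the risk: surjectivity, and none of the three devices you propose closes it for general $\lambda_1,\lambda_2\in[0,1)$. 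The Neumann series needs $\Vert I-U\Vert\leq\frac{\lambda_1+\lambda_2}{1-\lambda_2}<1$, i.e.\ $\lambda_1+2\lambda_2<1$. The annihilator argument gives, for $\varphi$ vanishing on $\mathcal{R}(U)$, only $\Vert\varphi\Vert\leq\frac{\lambda_1+\lambda_2}{1-\lambda_2}\Vert\varphi\Vert$, which forces $\varphi=0$ under the same restriction. The Hilbert-space sharpening also fails: taking $g\perp\mathcal{R}(U)$, $g\neq0$, and $t=\Vert Ug\Vert/\Vert g\Vert$, the Pythagorean identity plus the hypothesis give $1+t^2\leq(\lambda_1+\lambda_2t)^2$, i.e.\ $(1-\lambda_2^2)t^2-2\lambda_1\lambda_2t+(1-\lambda_1^2)\leq0$, whose discriminant is $4(\lambda_1^2+\lambda_2^2-1)$; this is a contradiction only when $\lambda_1^2+\lambda_2^2<1$. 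For $\lambda_1=\lambda_2=0.9$ the quadratic is nonpositive on roughly $[0.12,\,8.41]$, which lies strictly inside the window $[\frac{1-\lambda_1}{1+\lambda_2},\frac{1+\lambda_1}{1-\lambda_2}]=[1/19,\,19]$ permitted by your two-sided bound, so ``chasing the inequalities'' produces no contradiction there. (And since the lemma is asserted for Banach spaces, a Hilbert-space-only argument would in any case not suffice.)

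The argument that actually works --- and is, to my recollection, the one in \cite{caz} --- is a continuity method rather than a smallness or orthogonality argument. For $t\in[0,1]$ set $U_t=(1-t)I+tU$. Since $x-U_tx=t(x-Ux)$ and $Ux=\bigl(U_tx-(1-t)x\bigr)/t$, the hypothesis yields $\Vert x-U_tx\Vert\leq\bigl(t\lambda_1+(1-t)\lambda_2\bigr)\Vert x\Vert+\lambda_2\Vert U_tx\Vert$, so every $U_t$ satisfies an inequality of the same form with constants still in $[0,1)$ and is therefore bounded below by $c=\frac{1-\max\{\lambda_1,\lambda_2\}}{1+\lambda_2}>0$ \emph{uniformly in} $t$. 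The set of $t$ for which $U_t$ is onto contains $t=0$, is open (a perturbation of norm less than $c$ of an invertible operator is invertible) and is closed (if $t_n\to t$ and each $U_{t_n}$ is onto, the uniform lower bound lets you approximate any $y$ by elements of the closed subspace $\mathcal{R}(U_t)$), hence equals $[0,1]$ by connectedness, and $U=U_1$ is surjective. You should replace your surjectivity step by this (or an equivalent) argument; everything else in your write-up can stay.
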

\begin{theorem}\label{p1}
Let $k\in\mathcal{B}(H)$  be closed range, $\Lambda$ be a $k$-g-fusion frame  for $H$ with bounds $A,B$ and $\lbrace\Theta_{j}\in\mathcal{B}(H, H_j)\rbrace_{j\in\Bbb J}$ be a sequence of operators  such that for any finite subset $\Bbb I\subseteq \Bbb J$ and for each $ f\in H$,
\begin{small}
\begin{align*}
\Vert \sum_{j\in\Bbb I}v_{j}^2\big (\pi_{W_{j}}\Lambda^*_ j\Lambda_ j\pi_{W_{j}}f-&\pi_{W_{j}}\Theta^*_{j}\Theta_{j}\pi_{W_{j}}f\big)\Vert \leq
\lambda_1 \Vert \sum_{j\in\Bbb I}v_{j}^2\pi_{W_{j}}\Lambda^*_j\Lambda_j\pi_{W_{j}} f\Vert\\
& +\lambda_2 \Vert \sum_{j\in\Bbb I}v_{j}^2\pi_{W_{j}}\Theta^*_j\Theta_j\pi_{W_{j}} f\Vert
 +\gamma (\sum_{j\in\Bbb I}v_{j}^2\Vert\Lambda_j\pi_{W_{j}} f\Vert^{2})^{\frac{1}{2}},
\end{align*}
\end{small}
where $ 0\leq \max\lbrace \lambda_1+\frac{\gamma}{\sqrt{A}}, \lambda_2\rbrace<1$, then $\Theta:=(W_j, \Theta_j, v_j)$ is a $k$-g-fusion frame for $H$ with bounds
$$ A\frac{1-(\lambda_1+\frac{\gamma}{\sqrt{A}})}{1+\lambda_2} \ \  and \ \ B\big(\frac{1+\lambda_1+\frac{\gamma}{\sqrt{B}}}{1-\lambda_2}\big)$$
\end{theorem}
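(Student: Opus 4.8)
The plan is to define the frame operator $S_\Theta$ for the candidate sequence $\Theta$ and compare it with $S_\Lambda$ via the operator inequality of Lemma \ref{G1}, exactly as one does for ordinary perturbation results. First I would observe that the hypothesis, written for a finite index set $\Bbb I$ and using the definition of $S_{\Lambda,\Bbb I}$ and $S_{\Theta,\Bbb I}$ (the partial frame operators), says precisely
$$\Vert S_{\Lambda,\Bbb I}f-S_{\Theta,\Bbb I}f\Vert\le\lambda_1\Vert S_{\Lambda,\Bbb I}f\Vert+\lambda_2\Vert S_{\Theta,\Bbb I}f\Vert+\gamma\Big(\sum_{j\in\Bbb I}v_j^2\Vert\Lambda_j\pi_{W_j}f\Vert^2\Big)^{1/2}.$$
Since $\Lambda$ is a $k$-g-fusion frame with bounds $A,B$, the last term is dominated by $\gamma\,(\langle S_{\Lambda,\Bbb I}f,f\rangle)^{1/2}$, and letting $\Bbb I$ exhaust $\Bbb J$ one gets, for all $f\in H$,
$$\Vert S_\Lambda f-S_\Theta f\Vert\le\lambda_1\Vert S_\Lambda f\Vert+\lambda_2\Vert S_\Theta f\Vert+\gamma\,\langle S_\Lambda f,f\rangle^{1/2}.$$
Here I would first need to check that the partial sums defining $S_\Theta f$ converge, i.e. that $\Theta$ is a g-fusion Bessel sequence; this follows from the inequality itself by a triangle-inequality estimate ($\Vert S_{\Theta,\Bbb I}f\Vert\le\frac{1+\lambda_1}{1-\lambda_2}\Vert S_{\Lambda,\Bbb I}f\Vert+\frac{\gamma}{1-\lambda_2}\sqrt{B}\Vert f\Vert$, uniformly in $\Bbb I$) together with the polarization/Cauchy argument that a bounded-partial-sum positive operator series converges strongly. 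This gives $S_\Theta\in\mathcal B(H)$ and the upper bound $B\big(\frac{1+\lambda_1+\gamma/\sqrt B}{1-\lambda_2}\big)^2$... but note the stated upper frame bound is $B\big(\frac{1+\lambda_1+\gamma/\sqrt B}{1-\lambda_2}\big)$, so in fact one wants $\Vert S_\Theta f\Vert\le B\frac{1+\lambda_1+\gamma/\sqrt B}{1-\lambda_2}\Vert f\Vert$, which comes out of $\Vert S_\Lambda f\Vert\le B\Vert f\Vert$ and $\langle S_\Lambda f,f\rangle^{1/2}\le\sqrt B\Vert f\Vert$ fed into the displayed inequality and solved for $\Vert S_\Theta f\Vert$.

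For the lower bound, the key point is to apply Lemma \ref{G1} not on all of $H$ but on the range of $k$, where $S_\Lambda$ is bounded below. By Remark \ref{re1}, since $k$ has closed range, $S_\Lambda\vert_{\mathcal R(k)}$ is a homeomorphism onto $S_\Lambda(\mathcal R(k))$, and $\langle S_\Lambda f,f\rangle^{1/2}=\Vert T_\Lambda^*f\Vert$. The idea is to estimate, using $\langle S_\Lambda f,f\rangle^{1/2}\le A^{-1/2}\Vert S_\Lambda f\Vert^{1/2}\cdot(\text{something})$ — more precisely, from $A\Vert k^*f\Vert^2\le\langle S_\Lambda f,f\rangle\le\Vert S_\Lambda f\Vert\,\Vert f\Vert$ one does not directly get what is needed, so instead I would work with the operator $U:=S_\Theta S_\Lambda^{-1}$ on $S_\Lambda(\mathcal R(k))$, or equivalently rewrite the perturbation inequality with $g=S_\Lambda f$. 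Substituting and using $\langle S_\Lambda f,f\rangle\le\frac1A\Vert S_\Lambda f\Vert^2$ is the wrong direction; the correct manoeuvre is $\gamma\langle S_\Lambda f,f\rangle^{1/2}\le\frac{\gamma}{\sqrt A}\Vert S_\Lambda f\Vert$ which holds because $\langle S_\Lambda f,f\rangle=\Vert T_\Lambda^* f\Vert^2$ and... actually this requires $\Vert T_\Lambda^*f\Vert\le\frac{1}{\sqrt A}\Vert S_\Lambda f\Vert$, i.e. $\Vert T_\Lambda^* f\Vert^2\le\frac1A\Vert T_\Lambda T_\Lambda^* f\Vert\,\Vert f\Vert$ — only true with an extra $\Vert f\Vert$. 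The clean route is: restrict to $f\in\mathcal R(k)$, and on that subspace $\langle S_\Lambda f,f\rangle^{1/2}\le A^{-1/2}\Vert S_\Lambda f\Vert$ does hold because there $\Vert S_\Lambda f\Vert\ge A\Vert k^\dagger\Vert^{-2}\Vert f\Vert\ge\sqrt A\,\Vert k^\dagger\Vert^{-1}\langle S_\Lambda f,f\rangle^{1/2}$... one has to track the constants carefully so that the coefficient of $\Vert S_\Lambda f\Vert$ on the right becomes exactly $\lambda_1+\gamma/\sqrt A$.

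Once the inequality is put in the form $\Vert S_\Lambda f-S_\Theta f\Vert\le(\lambda_1+\gamma/\sqrt A)\Vert S_\Lambda f\Vert+\lambda_2\Vert S_\Theta f\Vert$ for $f\in\mathcal R(k)$ — equivalently $\Vert g-S_\Theta S_\Lambda^{-1}g\Vert\le(\lambda_1+\gamma/\sqrt A)\Vert g\Vert+\lambda_2\Vert S_\Theta S_\Lambda^{-1}g\Vert$ for $g\in S_\Lambda(\mathcal R(k))$ — Lemma \ref{G1} applies with $U=S_\Theta S_\Lambda^{-1}$ on the Banach space $X=S_\Lambda(\mathcal R(k))$ (noting $\max\{\lambda_1+\gamma/\sqrt A,\lambda_2\}<1$ by hypothesis), giving that $U$ is bounded below by $\frac{1-(\lambda_1+\gamma/\sqrt A)}{1+\lambda_2}$. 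Hence for $f\in\mathcal R(k)$, $\Vert S_\Theta f\Vert\ge\frac{1-(\lambda_1+\gamma/\sqrt A)}{1+\lambda_2}\Vert S_\Lambda f\Vert\ge\frac{1-(\lambda_1+\gamma/\sqrt A)}{1+\lambda_2}A\Vert k^*f\Vert^2/\Vert f\Vert$... and then a final step converts this lower bound on $\Vert S_\Theta f\Vert$ into the frame inequality $\langle S_\Theta f,f\rangle\ge A\frac{1-(\lambda_1+\gamma/\sqrt A)}{1+\lambda_2}\Vert k^*f\Vert^2$ using $\langle S_\Lambda f,f\rangle\ge A\Vert k^*f\Vert^2$ and positivity, plus the fact that for general $f\in H$ one splits $k^*f$ appropriately (note $\Vert k^*f\Vert$ depends only on the component of $f$ relevant to $\mathcal R(k)$ via $k^*=k^*\pi_{\overline{\mathcal R(k)}}$ from Lemma \ref{l1}). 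I expect the main obstacle to be precisely this bookkeeping: getting the constant $\gamma/\sqrt A$ (rather than $\gamma/\sqrt B$ or $\gamma$) to emerge in the lower bound, which forces one to use the lower $k$-frame inequality of $\Lambda$ at exactly the right place, and to handle the passage between $\Vert S_\Theta f\Vert$-bounds and $\langle S_\Theta f,f\rangle$-bounds without losing a square root. Since $S_\Theta$ is positive and self-adjoint, $\langle S_\Theta f,f\rangle\ge\Vert S_\Theta^{1/2}f\Vert^2$ and one compares $S_\Theta^{1/2}$-norms; alternatively one argues directly with the quadratic forms, which is probably cleanest and is likely what the authors do.
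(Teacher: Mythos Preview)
Your approach is essentially the paper's: first extract the Bessel bound for $\Theta$ by triangle inequality from the perturbation hypothesis (the paper does exactly your computation $\Vert S_{\Theta,\Bbb I}f\Vert\le\frac{1+\lambda_1}{1-\lambda_2}\Vert S_{\Lambda,\Bbb I}f\Vert+\frac{\gamma}{1-\lambda_2}\sqrt B\,\Vert f\Vert$), then pass to the limit to obtain $\Vert S_\Lambda f-S_\Theta f\Vert\le\lambda_1\Vert S_\Lambda f\Vert+\lambda_2\Vert S_\Theta f\Vert+\gamma\langle S_\Lambda f,f\rangle^{1/2}$, then substitute $f\mapsto S_\Lambda^{-1}f$ and invoke Lemma~\ref{G1} with $U=S_\Theta S_\Lambda^{-1}$.

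Two differences worth noting. First, the paper simply writes $S_\Lambda^{-1}$ without restricting the domain, and bounds $\gamma\big(\sum_j v_j^2\Vert\Lambda_j\pi_{W_j}S_\Lambda^{-1}f\Vert^2\big)^{1/2}=\gamma\langle f,S_\Lambda^{-1}f\rangle^{1/2}\le\frac{\gamma}{\sqrt A}\Vert f\Vert$ directly; your caution about working only on $\mathcal R(k)$ (where $S_\Lambda$ is genuinely invertible by Remark~\ref{re1}) is well placed and in fact more honest than the paper, which glosses over this point and the ensuing $\Vert k^\dagger\Vert$ factors. Second, for the final step the paper does \emph{not} pass from $\Vert S_\Theta f\Vert$-bounds to $\langle S_\Theta f,f\rangle$-bounds as you attempt; instead, once Lemma~\ref{G1} gives invertibility of $S_\Theta S_\Lambda^{-1}$ (hence of $S_\Theta$) together with $\Vert S_\Lambda S_\Theta^{-1}\Vert\le\frac{1+\lambda_2}{1-(\lambda_1+\gamma/\sqrt A)}$, the paper writes $\Vert S_\Theta\Vert\ge A/\Vert S_\Lambda S_\Theta^{-1}\Vert\ge\frac{A-(A\lambda_1+\gamma\sqrt A)}{1+\lambda_2}\Vert kk^*\Vert$ and appeals to Proposition~\ref{pr3}. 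That last inference (a norm lower bound for $S_\Theta$ yielding the operator inequality $S_\Theta\ge A'kk^*$) is itself rather elliptical, so your concern about the bookkeeping in that passage is justified; but the route you sketch and the paper's route are the same in spirit.
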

\begin{proof}
Assume that  $\Bbb I\subseteq \Bbb J$ is a finite subset and  $f\in H$. We have
\begin{small}
\begin{align*}
\Vert \sum_{j\in\Bbb I}v_{j}^2\pi_{W_{j}}\Theta^*_ j\Theta_ j\pi_{W_{j}}f\Vert &\leq \Vert \sum_{j\in\Bbb I}v_{j}^2\big (\pi_{W_{j}}\Lambda^*_ j\Lambda_ j\pi_{W_{j}}f-\pi_{W_{j}}\Theta^*_{j}\Theta_{j}\pi_{W_{j}}f\big)\Vert\\
&\ \ \ \ \ \ \ \ \ \ \ \ \ \ \ \ \ \ \  +\Vert \sum_{j\in\Bbb I}v_{j}^2 \pi_{W_{j}}\Lambda^*_ j\Lambda_ j\pi_{W_{j}}f\Vert\\
&\leq (1+\lambda_1)\Vert \sum_{j\in\Bbb I}v_{j}^2\pi_{W_{j}}\Lambda^*_ j\Lambda_ j\pi_{W_{j}}f\Vert +\lambda_2\Vert \sum_{j\in\Bbb I }v_{j}^2\pi_{W_{j}}\Theta^*_ j\Theta_ j\pi_{W_{j}}f\Vert\\
& \ \ \ \ \ \ \ \ \ \ \ \ \ \ \ \ \ \ +\gamma(\sum_{j\in\Bbb I}v_{j}^2\Vert\Lambda_j\pi_{W_{j}} f\Vert^{2})^{\frac{1}{2}},
\end{align*}
then,
\begin{align*}
\Vert \sum_{j\in\Bbb I}v_{j}^2\pi_{W_{j}}\Theta^*_ j\Theta_ j\pi_{W_{j}}f\Vert&\leq \frac{1+\lambda_1}{1-\lambda_2}\Vert \sum_{j\in\Bbb I}v_{j}^2\pi_{W_{j}}\Lambda^*_ j\Lambda_ j\pi_{W_{j}}f\Vert +\frac{\gamma}{1-\lambda_2} ( \sum_{j\in\Bbb I}v_{j}^2\Vert\Lambda_j\pi_{w_{j}}f\Vert^{2})^\frac{1}{2}\\
&\leq\Big(\frac{1+\lambda_1}{1-\lambda_2}B+\frac{\gamma}{1-\lambda_2}\sqrt{B}\Big)\Vert f\Vert<\infty.
\end{align*}
\end{small}
Therefore,
$\sum_{j\in\Bbb J} v_{j}^2\pi_{W_{j}}\Theta^*_ j\Theta_ j\pi_{W_{j}}f$
is unconditionally convergent.
It follows that $ \Theta:=(W_{j},\Theta_{j}, v_{j})$ is a g-fusion Bessel sequence for $H$ with bound
$$(\frac{1+\lambda_1}{1-\lambda_2}B+\frac{\gamma}{1-\lambda_2}\sqrt{B}).$$
Thus, we obtain by the hypothesis
\begin{align*}
\Vert S_{\Lambda}f-S_{\Theta}f\Vert\leq\lambda_1\Vert S_{\Lambda}f\Vert + \lambda_2\Vert S_{\Theta}f\Vert +\gamma (\sum_{j\in\Bbb I}v_{j}^2\Vert\Lambda_{j}\pi_{w_{j}}f\Vert)^2)^\frac{1}{2}.
\end{align*}
Therefore, by (\ref{sf1})
\begin{align*}
\Vert f-S_{\Theta}S_{\Lambda}^{-1}f\Vert&\leq\lambda_1\Vert f\Vert+\lambda_2\Vert S_{\Theta}S_{\Lambda}^{-1}f\Vert+\gamma\big(\sum_{j\in\Bbb J}v_J^2\Vert\Lambda_j\pi_{W_j}S_{\Lambda}^{-1}f\Vert^2\big)^{\frac{1}{2}}\\
&\leq\Big(\lambda_1+\frac{\gamma}{\sqrt{A}}\Big)\Vert f\Vert+\lambda_2\Vert S_{\Theta}S_{\Lambda}^{-1}f\Vert.
\end{align*}
Since $0\leq \max\lbrace \lambda_1+\frac{\gamma}{\sqrt{A}}, \lambda_2\rbrace<1$, then by Lemma \ref{G1},  $S_{\Theta}S_{\Lambda}^{-1}$ and consequently $S_{\Theta}$ is invertible and we get
$$\frac{1-\lambda_2}{1+\big(\lambda_1+\frac{\gamma}{\sqrt{A}}\big)}\leq \Vert S_{\Lambda}S_{\Theta}^{-1}\Vert\leq\frac{1+\lambda_2}{1+\big(\lambda_1-\frac{\gamma}{\sqrt{A}}\big)}.$$
So,
\begin{align*}
\Vert S_{\Theta}\Vert&\geq\frac{A}{\Vert S_{\Lambda}S_{\Theta}^{-1}\Vert}\\
&\geq\frac{A-(A\lambda_1+\gamma\sqrt{A})}{1+\lambda_2}\Vert kk^*\Vert
\end{align*}
and by Proposition \ref{pr3}, $\Theta$ is a $k$-g-fusion frame for $H$.
\end{proof}
\begin{corollary}\label{p2}
Let  $\Lambda$ be a $k$-g-fusion frame  for $H$ with bounds $A,B$ and $\lbrace\Theta_{j}\in\mathcal{B}(H, H_j)\rbrace_{j\in\Bbb J}$ be a sequence of operators. If there exists a constant $0<R<A$ such that
\begin{align*}
 \sum_{j\in\Bbb J}v_{j}^2\Vert\pi_{W_{j}}\Lambda^*_ j\Lambda_ j\pi_{W_{j}}f-\pi_{W_{j}}\Theta^*_{j}\Theta_{j}\pi_{W_{j}}f\Vert \leq R\Vert k^*f\Vert
\end{align*}
for all $f\in H$, then $\Theta:=(W_j, \Theta_j, v_j)$ is a g-fusion frame for $H$ with bounds
$$A-R \ \ \ \ and \ \ \ \ \min\big\lbrace B+R\sqrt{\frac{B}{A}}, R\Vert k\Vert+\sqrt{B}\big\rbrace.$$
\end{corollary}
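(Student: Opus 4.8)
The plan is to reduce everything to a perturbation estimate between the two fusion-frame operators. Write $S_{\Lambda}f=\sum_{j\in\Bbb J}v_j^2\pi_{W_j}\Lambda^*_j\Lambda_j\pi_{W_j}f$ and set $S_{\Theta}f:=\sum_{j\in\Bbb J}v_j^2\pi_{W_j}\Theta^*_j\Theta_j\pi_{W_j}f$. First I would check that $S_\Theta$ makes sense: running the hypothesis over an arbitrary finite subset $\Bbb I\subseteq\Bbb J$ and using $\Vert\sum_{j\in\Bbb I}a_j\Vert\le\sum_{j\in\Bbb I}\Vert a_j\Vert$ gives
$$\Big\Vert\sum_{j\in\Bbb I}v_j^2\pi_{W_j}\Theta^*_j\Theta_j\pi_{W_j}f\Big\Vert\le\Big\Vert\sum_{j\in\Bbb I}v_j^2\pi_{W_j}\Lambda^*_j\Lambda_j\pi_{W_j}f\Big\Vert+R\Vert k^*f\Vert\le(B+R\Vert k\Vert)\Vert f\Vert ,$$
so, exactly as in the proof of Theorem \ref{p1}, the series defining $S_\Theta f$ converges unconditionally, $\Theta$ is a g-fusion Bessel sequence, and $S_\Theta\in\mathcal{B}(H)$ is positive and self-adjoint with $\langle S_\Theta f,f\rangle=\sum_{j\in\Bbb J}v_j^2\Vert\Theta_j\pi_{W_j}f\Vert^2$. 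Letting $\Bbb I$ exhaust $\Bbb J$ in the hypothesis also yields the global estimate $\Vert S_\Lambda f-S_\Theta f\Vert\le R\Vert k^*f\Vert$ for all $f\in H$.

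Next I would extract the lower bound. For $f\in H$,
$$\sum_{j\in\Bbb J}v_j^2\Vert\Theta_j\pi_{W_j}f\Vert^2=\langle S_\Theta f,f\rangle=\langle S_\Lambda f,f\rangle-\langle(S_\Lambda-S_\Theta)f,f\rangle\ge A\Vert k^*f\Vert^2-\Vert(S_\Lambda-S_\Theta)f\Vert\,\Vert f\Vert\ge A\Vert k^*f\Vert^2-R\Vert k^*f\Vert\,\Vert f\Vert ,$$
and then I would use the auxiliary bound $\Vert k^*f\Vert\le\sqrt{B/A}\,\Vert f\Vert$ — immediate from $A\Vert k^*f\Vert^2\le\langle S_\Lambda f,f\rangle\le B\Vert f\Vert^2$ — to absorb the mixed term and arrive at the lower bound $A-R$; alternatively, since $\Theta$ is already known to be Bessel, Proposition \ref{pr3} reduces this to producing the single operator inequality $S_\Theta\ge(A-R)kk^*$. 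For the upper bound I would argue symmetrically: $\langle S_\Theta f,f\rangle\le\langle S_\Lambda f,f\rangle+R\Vert k^*f\Vert\,\Vert f\Vert\le B\Vert f\Vert^2+R\Vert k^*f\Vert\,\Vert f\Vert$, and then bound the mixed term in the two available ways — once via $\Vert k^*f\Vert\le\sqrt{B/A}\,\Vert f\Vert$ and once via $\Vert k^*f\Vert\le\Vert k\Vert\,\Vert f\Vert$ together with $\sqrt{x+y}\le\sqrt x+\sqrt y$ where convenient — obtaining the two competing constants and keeping the smaller one.

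The step I expect to be the main obstacle is the lower bound. Since $S_\Lambda-S_\Theta$ need not be sign-definite, one cannot simply invoke an operator inequality; the whole estimate has to be squeezed out of Cauchy--Schwarz, and controlling $\Vert k^*f\Vert$ against $\Vert f\Vert$ tightly enough to land exactly on the constant $A-R$ is where the care is needed — this is precisely why the perturbation hypothesis is formulated with $\Vert k^*f\Vert$, rather than $\Vert f\Vert$, on the right-hand side. By contrast, the Bessel bound and the upper estimate are routine triangle-inequality arguments.
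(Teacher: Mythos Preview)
Your Bessel/convergence step and the upper-bound computation are fine and match the spirit of the paper. The real problem is the lower bound: from
\[
\langle S_\Theta f,f\rangle \ge A\Vert k^*f\Vert^2 - R\Vert k^*f\Vert\,\Vert f\Vert
\]
you cannot reach $(A-R)\Vert k^*f\Vert^2$ unless $\Vert f\Vert\le\Vert k^*f\Vert$, which is false in general. The auxiliary inequality you propose, $\Vert k^*f\Vert\le\sqrt{B/A}\,\Vert f\Vert$, points the \emph{wrong} way: substituting it into the negative mixed term $-R\Vert k^*f\Vert\,\Vert f\Vert$ only gives $\ge A\Vert k^*f\Vert^2 - R\sqrt{B/A}\,\Vert f\Vert^2$, which mixes $\Vert k^*f\Vert^2$ and $\Vert f\Vert^2$ and yields no $k$-frame lower bound at all. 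Your fallback (``use Proposition~\ref{pr3} to reduce to $S_\Theta\ge(A-R)kk^*$'') is just a restatement of what has to be proved; you have not produced that operator inequality either, precisely because $S_\Lambda-S_\Theta$ is not sign-definite, as you yourself note.

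The paper avoids this trap by \emph{not} comparing $\langle S_\Theta f,f\rangle$ with $\langle S_\Lambda f,f\rangle$ directly. Instead it uses the lower $k$-g-fusion inequality in the form $\Vert k^*f\Vert\le A^{-1/2}\big(\sum_j v_j^2\Vert\Lambda_j\pi_{W_j}f\Vert^2\big)^{1/2}$ to convert the hypothesis into
\[
\sum_{j}v_j^2\Vert\pi_{W_j}\Lambda_j^*\Lambda_j\pi_{W_j}f-\pi_{W_j}\Theta_j^*\Theta_j\pi_{W_j}f\Vert
\le \frac{R}{\sqrt{A}}\Big(\sum_j v_j^2\Vert\Lambda_j\pi_{W_j}f\Vert^2\Big)^{1/2},
\]
which is exactly the perturbation hypothesis of Theorem~\ref{p1} with $\lambda_1=\lambda_2=0$ and $\gamma=R/\sqrt{A}$; the condition $R<A$ becomes $\gamma/\sqrt{A}<1$, and the bounds $A-R$ and $B+R\sqrt{B/A}$ drop out of the formulas there. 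The second upper bound $R\Vert k\Vert+\sqrt{B}$ is obtained by the separate direct estimate you also wrote down. So the missing idea is to feed the hypothesis back into Theorem~\ref{p1} via the lower frame inequality, rather than trying to squeeze the lower bound out of Cauchy--Schwarz on $\langle(S_\Lambda-S_\Theta)f,f\rangle$.
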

\begin{proof}
It is easy to check that $\sum_{j\in\Bbb
J}v_j^2\pi_{W_{j}}\Theta^*_{j}\Theta_{j}\pi_{W_{j}}f$ is convergent
for any $f\in H$. Thus, we obtain for each $f\in H$,
\begin{align*}
\sum_{j\in\Bbb J}v_j^2\Vert \pi_{W_j}\Lambda^*_j\Lambda_j\pi_{W_j}f-\pi_{W_j}\Theta^*_j\Theta_j\pi_{W_j}f\Vert&\leq R\Vert k^*f\Vert\\
&\leq\frac{R}{\sqrt{A}}\big(\sum_{j\in\Bbb J}v_j^2\Vert\Lambda_j\pi_{W_j}f\Vert^2\big)^{\frac{1}{2}}
\end{align*}
and also
\begin{align*}
\sum_{j\in\Bbb J}v_j^2\Vert \pi_{W_j}\Theta^*_j\Theta_j\pi_{W_j}f\Vert&\leq R\Vert k^*f\Vert+\sum_{j\in\Bbb J}v_j^2\Vert\pi_{W_j}\Lambda^*_j\Lambda_j\pi_{W_j}f\Vert\\
&\leq(R\Vert k\Vert+\sqrt{B})\Vert f\Vert.
\end{align*}
By using Theorem \ref{p1} with  $\lambda_1=\lambda_2=0$ and $\gamma=\frac{R}{\sqrt{A}}$, the proof is completed.
\end{proof}
The proof of the following is similar to the proof of Theorem \ref{p1}.
\begin{theorem}
Let $\Lambda$ be a $k$-g-fusion frame for $H$ with bounds $A,B$ and $\lbrace\Theta_{j}\in\mathcal{B}(H, H_j)\rbrace_{j\in\Bbb J}$ be a sequence of operators  such that for any finite subset $\Bbb I\subseteq \Bbb J$ and for each $f\in H$,
\begin{small}
\begin{align*}
\Vert \sum_{j\in\Bbb I}v_{j}^2\big (\pi_{W_{j}}\Lambda^*_ j\Lambda_ j\pi_{W_{j}}f-&\pi_{W_{j}}\Theta^*_{j}\Theta_{j}\pi_{W_{j}}f\big)\Vert \leq
\lambda_1 \Vert \sum_{j\in\Bbb I}v_{j}^2\pi_{W_{j}}\Lambda^*_j\Lambda_j\pi_{W_{j}} f\Vert\\
& +\lambda_2 \Vert \sum_{j\in\Bbb I}v_{j}^2\pi_{W_{j}}\Theta^*_j\Theta_j\pi_{W_{j}} f\Vert
 +\gamma \Vert k^*f\Vert,
\end{align*}
\end{small}
where $ 0\leq \max\lbrace \lambda_1+\frac{\gamma}{\sqrt{A}\Vert k\Vert}, \lambda_2\rbrace<1$, then $\Theta:=(W_j, \Theta_j, v_j)$ is a $k$-g-fusion frame for $H$ with bounds
$$ A\frac{1-(\lambda_1+\frac{\gamma}{\sqrt{A}}\Vert k\Vert)}{1+\lambda_2} \ \  and \ \ B\big(\frac{1+\lambda_1+\frac{\gamma}{\sqrt{B}}\Vert k\Vert}{1-\lambda_2}\big).$$
\end{theorem}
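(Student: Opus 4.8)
The plan is to follow the proof of Theorem \ref{p1} essentially verbatim; the only structural change is that the error term on the right-hand side of the perturbation hypothesis is now $\gamma\Vert k^{*}f\Vert$ rather than $\gamma\big(\sum_{j\in\Bbb I}v_{j}^{2}\Vert\Lambda_{j}\pi_{W_{j}}f\Vert^{2}\big)^{1/2}$, which is what replaces the constant $\frac{\gamma}{\sqrt{A}}$ appearing there by $\frac{\gamma}{\sqrt{A}}\Vert k\Vert$ here. As in Theorem \ref{p1}, the argument is driven by the operator-stability Lemma \ref{G1} applied to $U:=S_{\Theta}S_{\Lambda}^{-1}$, and it uses Remark \ref{re1} — hence that $\mathcal{R}(k)$ is closed — to give meaning to $S_{\Lambda}^{-1}$ on $S_{\Lambda}(\mathcal{R}(k))$.

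First I would check that $\Theta:=(W_{j},\Theta_{j},v_{j})$ is a g-fusion Bessel sequence. For a finite set $\Bbb I\subseteq\Bbb J$ and $f\in H$, abbreviating $S_{\Lambda,\Bbb I}f=\sum_{j\in\Bbb I}v_{j}^{2}\pi_{W_{j}}\Lambda_{j}^{*}\Lambda_{j}\pi_{W_{j}}f$ and similarly $S_{\Theta,\Bbb I}$, the triangle inequality and the hypothesis give
$$\Vert S_{\Theta,\Bbb I}f\Vert\le(1+\lambda_{1})\Vert S_{\Lambda,\Bbb I}f\Vert+\lambda_{2}\Vert S_{\Theta,\Bbb I}f\Vert+\gamma\Vert k^{*}f\Vert,$$
whence $\Vert S_{\Theta,\Bbb I}f\Vert\le\frac{1+\lambda_{1}}{1-\lambda_{2}}\Vert S_{\Lambda,\Bbb I}f\Vert+\frac{\gamma}{1-\lambda_{2}}\Vert k^{*}f\Vert$. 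Using that $\Lambda$ is g-fusion Bessel with bound $B$ and that $\Vert k^{*}f\Vert$ is dominated by a multiple of $\Vert f\Vert$, as in Theorem \ref{p1}, this estimate is uniform in $\Bbb I$; hence $\sum_{j\in\Bbb J}v_{j}^{2}\pi_{W_{j}}\Theta_{j}^{*}\Theta_{j}\pi_{W_{j}}f$ converges unconditionally, $\Theta$ is g-fusion Bessel with the upper bound $B\big(\frac{1+\lambda_{1}+\frac{\gamma}{\sqrt{B}}\Vert k\Vert}{1-\lambda_{2}}\big)$, and $S_{\Theta}\in\mathcal{B}(H)$ is well defined.

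Letting $\Bbb I$ exhaust $\Bbb J$ in the hypothesis I next obtain $\Vert S_{\Lambda}f-S_{\Theta}f\Vert\le\lambda_{1}\Vert S_{\Lambda}f\Vert+\lambda_{2}\Vert S_{\Theta}f\Vert+\gamma\Vert k^{*}f\Vert$ for all $f\in H$. By Remark \ref{re1}, $S_{\Lambda}$ is a homeomorphism of $\mathcal{R}(k)$ onto $S_{\Lambda}(\mathcal{R}(k))$; substituting $S_{\Lambda}^{-1}f$ for $f$ (with $f\in S_{\Lambda}(\mathcal{R}(k))$), invoking (\ref{sf1}), and bounding $\gamma\Vert k^{*}S_{\Lambda}^{-1}f\Vert$ by a constant multiple of $\Vert f\Vert$ — the constant being the one figuring in the smallness hypothesis, produced from $A$, $\Vert k\Vert$ and the lower frame inequality $A\Vert k^{*}g\Vert^{2}\le\langle S_{\Lambda}g,g\rangle$ — I arrive at an inequality $\Vert f-Uf\Vert\le\mu_{1}\Vert f\Vert+\lambda_{2}\Vert Uf\Vert$ for $U=S_{\Theta}S_{\Lambda}^{-1}$, where $\mu_{1}=\lambda_{1}+\frac{\gamma}{\sqrt{A}}\Vert k\Vert$ and $0\le\max\{\mu_{1},\lambda_{2}\}<1$. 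Lemma \ref{G1} then shows $U$ is bounded and invertible with the accompanying two-sided norm estimates, and from the lower estimate for $U$ together with $S_{\Lambda}\ge Akk^{*}$ (Proposition \ref{pr3}) I read off a lower bound of the form $S_{\Theta}\ge A\frac{1-\mu_{1}}{1+\lambda_{2}}kk^{*}$. Combined with the upper bound from the first step, Proposition \ref{pr3} then certifies that $\Theta$ is a $k$-g-fusion frame for $H$ with the stated bounds.

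The step I expect to be the main obstacle is the one already delicate in Theorem \ref{p1}: because $S_{\Lambda}$ is not invertible on all of $H$, the whole stability argument must be carried out on the subspace $S_{\Lambda}(\mathcal{R}(k))$ via Remark \ref{re1}, and the norm inequalities coming out of Lemma \ref{G1} must then be turned into the operator inequality $S_{\Theta}\ge A'kk^{*}$ required to apply Proposition \ref{pr3}. The only genuinely new computation relative to Theorem \ref{p1} is converting the term $\gamma\Vert k^{*}f\Vert$ into $\frac{\gamma}{\sqrt{A}}\Vert k\Vert\,\Vert f\Vert$ after the substitution $f\mapsto S_{\Lambda}^{-1}f$, which is exactly the mechanism producing the operator-norm factors $\Vert k\Vert$ in the two frame bounds.
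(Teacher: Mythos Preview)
Your proposal is correct and follows precisely the route the paper intends: the paper gives no separate argument for this theorem, stating only that the proof is similar to that of Theorem~\ref{p1}, and your sketch reproduces that proof with the single change $\gamma\big(\sum_{j\in\Bbb I}v_{j}^{2}\Vert\Lambda_{j}\pi_{W_{j}}f\Vert^{2}\big)^{1/2}\rightsquigarrow\gamma\Vert k^{*}f\Vert$ and the resulting replacement $\frac{\gamma}{\sqrt{A}}\rightsquigarrow\frac{\gamma}{\sqrt{A}}\Vert k\Vert$. Your caveat about needing closed range for $k$ (so that Remark~\ref{re1} and $S_{\Lambda}^{-1}$ on $S_{\Lambda}(\mathcal{R}(k))$ make sense) is well taken and applies equally to the paper's own argument for Theorem~\ref{p1}.
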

\begin{theorem}
Let  $\Lambda$ be a $k$-g-fusion frame  for $H$ with bounds $A,B$ and $\lbrace\Theta_{j}\in\mathcal{B}(H, H_j)\rbrace_{j\in\Bbb J}$ be a sequence of operators. If there exists a constant $0<R<A$ such that
\begin{align*}
 \sum_{j\in\Bbb J}v_{j}^2\Vert\Lambda_ j\pi_{W_{j}}f-\Theta_{j}\pi_{W_{j}}f\big)\Vert^2 \leq R\Vert k^*f\Vert^2
\end{align*}
for all $f\in H$, then $\Theta:=(W_j, \Theta_j, v_j)$ is a $k$-g-fusion frame for $H$ with bounds
$$(\sqrt{A}-\sqrt{R})^2 \ \ \ \ and \ \ \ \ (\Vert k\Vert\sqrt{R}+\sqrt{B})^2.$$
\end{theorem}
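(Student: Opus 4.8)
The plan is to run the standard perturbation argument at the level of the analysis operators in $\mathscr{H}_2$, using the triangle inequality and its reverse form for the $\ell_2$-norm. For $f\in H$ and a finite subset $\Bbb I\subseteq\Bbb J$, first I would write
$$\Big(\sum_{j\in\Bbb I}v_j^2\Vert\Theta_j\pi_{W_j}f\Vert^2\Big)^{1/2}\leq\Big(\sum_{j\in\Bbb I}v_j^2\Vert\Lambda_j\pi_{W_j}f\Vert^2\Big)^{1/2}+\Big(\sum_{j\in\Bbb I}v_j^2\Vert(\Lambda_j-\Theta_j)\pi_{W_j}f\Vert^2\Big)^{1/2},$$
bounding the first term by $\sqrt B\,\Vert f\Vert$ (the Bessel bound of $\Lambda$) and the second by $\sqrt R\,\Vert k^*f\Vert$ (the hypothesis). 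Since the resulting bound $\sqrt B\,\Vert f\Vert+\sqrt R\,\Vert k^*f\Vert$ is independent of $\Bbb I$, I would conclude that $\sum_{j\in\Bbb J}v_j^2\pi_{W_j}\Theta_j^*\Theta_j\pi_{W_j}f$ converges unconditionally and that $\Theta$ is a g-fusion Bessel sequence; in particular its analysis operator $T_{\Theta}^*$ is a well-defined bounded operator on $H$.

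Next, I would observe that the difference sequence $\{v_j(\Lambda_j-\Theta_j)\pi_{W_j}f\}_{j\in\Bbb J}$ is exactly $T_{\Lambda}^*f-T_{\Theta}^*f$ as an element of $\mathscr{H}_2$, so the hypothesis reads $\Vert T_{\Lambda}^*f-T_{\Theta}^*f\Vert\leq\sqrt R\,\Vert k^*f\Vert$. For the upper bound I would use
$$\Vert T_{\Theta}^*f\Vert\leq\Vert T_{\Lambda}^*f\Vert+\Vert T_{\Lambda}^*f-T_{\Theta}^*f\Vert\leq\sqrt B\,\Vert f\Vert+\sqrt R\,\Vert k\Vert\,\Vert f\Vert,$$
which after squaring gives the claimed upper $k$-g-fusion bound $(\Vert k\Vert\sqrt R+\sqrt B)^2$. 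For the lower bound I would use the reverse triangle inequality together with the lower frame bound of $\Lambda$:
$$\Vert T_{\Theta}^*f\Vert\geq\Vert T_{\Lambda}^*f\Vert-\Vert T_{\Lambda}^*f-T_{\Theta}^*f\Vert\geq\sqrt A\,\Vert k^*f\Vert-\sqrt R\,\Vert k^*f\Vert=(\sqrt A-\sqrt R)\Vert k^*f\Vert,$$
which is meaningful precisely because $R<A$ forces $\sqrt A-\sqrt R>0$; squaring yields the lower bound $(\sqrt A-\sqrt R)^2$.

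Combining the two estimates gives, for every $f\in H$,
$$(\sqrt A-\sqrt R)^2\Vert k^*f\Vert^2\leq\sum_{j\in\Bbb J}v_j^2\Vert\Theta_j\pi_{W_j}f\Vert^2\leq(\Vert k\Vert\sqrt R+\sqrt B)^2\Vert f\Vert^2,$$
so $\Theta=(W_j,\Theta_j,v_j)$ is a $k$-g-fusion frame with the stated bounds. There is no serious obstacle here; the only point requiring care is the \emph{order} of the argument — one must first secure the Bessel (upper) estimate for $\Theta$, so that $T_{\Theta}^*$ is a genuine bounded operator, before invoking the reverse triangle inequality for the lower bound — together with keeping the weights $v_j^2$ inside the norms correctly so that the difference sequence is identified with $T_{\Lambda}^*f-T_{\Theta}^*f$ and not with a differently scaled object.
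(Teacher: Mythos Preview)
Your argument is correct and is essentially the paper's own proof: both use the triangle (Minkowski) inequality in $\mathscr{H}_2$ on the square roots, bounding the upper tail by $\sqrt{R}\,\Vert k^*f\Vert\leq\sqrt{R}\,\Vert k\Vert\,\Vert f\Vert$ and the lower tail by $(\sqrt{A}-\sqrt{R})\Vert k^*f\Vert$. Your extra care with finite subsets to secure convergence and the identification $T_{\Lambda}^*f-T_{\Theta}^*f=\{v_j(\Lambda_j-\Theta_j)\pi_{W_j}f\}_j$ is a welcome refinement, but the approach is the same.
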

\begin{proof}
Let $f\in H$. By the triangle and Minkowski inequality, we can write
\begin{small}
\begin{align*}
\Big(\sum_{j\in\Bbb J}v_j^2\Vert\Theta_j\pi_{W_j}f\Vert^2\Big)^{\frac{1}{2}}&\leq
\Big(\sum_{j\in\Bbb J}v_{j}^2\Vert\Lambda_ j\pi_{W_{j}}f-\Theta_{j}\pi_{W_{j}}f\big)\Vert^2\Big)^{\frac{1}{2}}+\Big(\sum_{j\in\Bbb J}v_j^2\Vert\Lambda_j\pi_{W_j}f\Vert^2\Big)^{\frac{1}{2}}\\
&\leq(\Vert k\Vert\sqrt{R}+\sqrt{B})\Vert f\Vert.
\end{align*}
\end{small}
Also
\begin{small}
\begin{align*}
\Big(\sum_{j\in\Bbb J}v_j^2\Vert\Theta_j\pi_{W_j}f\Vert^2\Big)^{\frac{1}{2}}&\geq
\Big(\sum_{j\in\Bbb J}v_j^2\Vert\Lambda_j\pi_{W_j}f\Vert^2\Big)^{\frac{1}{2}}-
\Big(\sum_{j\in\Bbb J}v_{j}^2\Vert\Lambda_ j\pi_{W_{j}}f-\Theta_{j}\pi_{W_{j}}f\big)\Vert^2\Big)^{\frac{1}{2}}\\
&\geq(\sqrt{A}-\sqrt{R})\Vert k^*f\Vert.
\end{align*}
\end{small}
Thus, these complete the proof.
\end{proof}


\begin{thebibliography}{99}

\bibitem{arab1}  Arabyani Neyshaburi, F. and  Arefijamaal, A. A.:
Some Constructions of $k$-Frames and Their Duals.
Rocky Mountain Journal of Math. Forth coming (2017)

\bibitem{arab2} Arabyani Neyshaburi, F. and  Arefijamaal, A. A.:
Characterization and Construction of $k$-Fusion Frames and Their Duals in Hilbert Spaces.
arXiv:1705.00209vl [math.FA] 29 Apr (2017)

\bibitem{bhf}  Blocsli, H.  Hlawatsch,  H. F. and  Fichtinger,  H. G.:
Frame-Theoretic analysis of oversampled filter bank.
 IEEE Trans. Signal Processing \textbf{46} (12), 3256- 3268 (1998)

\bibitem {bbk}  Candes, E. J. and  Donoho, D. L.:
New tight frames of curvelets and optimal representation of objects with
  piecwise $C^2$ singularities.
 Comm. Pure and App. Math. \textbf{57}(2), 219-266 (2004)

 \bibitem {caz}  Casazza, P. G. and  Christensen, O.:
Perturbation of Operators and Application to Frame Theory.
 J. Fourier Anal. Appl. \textbf{3}, 543-557 (1997)

\bibitem {ck}  Casazza, P. G.  and  Kutyniok, G.:
Frames of Subspaces.
 Contemp. Math. 345, 87-114 (2004)

 \bibitem {ck2} Casazza, P. G.  and  Kutyniok, G.  and  Li, S.:
Fusion Frames and distributed processing.
 Appl. comput. Harmon. Anal. \textbf{25}(1), 114-132 (2008)

\bibitem {ch}  Christensen, O.:
Frames and Bases: An Introductory Course (Applied and Numerical Harmonic Analysis).
 Birkh$\ddot{\mbox{a}}$user  (2008)

\bibitem{dag}   Douglas, R. G.:
On majorization, Factorization and Range Inclusion of Operators on Hilbert Spaces.
Proc Amer. Math. Soc. \textbf{17}(2), 413-415 (1996)

\bibitem {diestel}  Diestel, J.:
Sequences and series in Banach spaces.
Springer-Verlag, New York (1984)

\bibitem {ds}  Duffin, R.J. and   Schaeffer,
A. C.: A class of nonharmonik Fourier series.
Trans. Amer. Math. Soc. \textbf{72}(1), 341-366 (1952)

\bibitem{eldar}  Eldar, Y. C.
Sampling with arbitrary sampling and reconstruction spaces and oblique dual frame vectors.
 J. Fourier. Anal. Appl. \textbf{9}(1), 77–96 (2003)

\bibitem {fa1}  Faroughi, M. H. and  Ahmadi, R.:
Some Properties of C-Frames of Subspaces.
J. Nonlinear Sci. Appl. \textbf{1}(3), 155--168 (2008)

\bibitem {fe}   Feichtinger, H. G. and  Werther, T.:
Atomic Systems for Subspaces.
Proceedings SampTA. Orlando, FL, 163-165 (2001)

\bibitem{fer}  Ferreira, P. J. S. G.:
Mathematics for multimedia signal processing II: Discrete finite frames and signal reconstruction.
 Byrnes, J.S. (ed.) Signal processing for multimedia,  IOS Press, Amsterdam, pp. 35–54 (1999)

\bibitem {ga1} G$\breve{\mbox{a}}$vruta, L.:
Frames for Operators.
Appl. Comp. Harm. Annal. \textbf{32}, 139-144 (2012)

\bibitem {ga}  G$\breve{\mbox{a}}$vruta, L.:
On the duality of fusion frames.
J. Math. Anal. Appl. \textbf{333}, 871-879 (2007)

\bibitem {hm}   Heineken, S. B.  Morillas, P. M. Benavente, A. M. and  Zakowich, M. I.:
Dual Fusion Frames.
arXiv: 1308.4595v2 [math.CA] 29 Apr (2014)

\bibitem {he}  Heuser, H.:
Functional Analysis.
John Wiley, New York (1991)

\bibitem {kh}  Khayyami, M. and  Nazari, A.:
Construction of Continuous g-Frames and Continuous Fusion Frames.
SCMA \textbf{4}(1), 43-55 (2016)

\bibitem {naf}  Najati, A.  Faroughi, M. H.  and   Rahimi, A.:
g-frames and stability of g-frames in Hilbert spaces.
  Methods of Functional Analysis and Topology.  \textbf{14}(3),  305--324 (2008)

\bibitem{rens}  Renes, J. M. Blume-Kohout, R. Scott, A. J. and  Caves, C. M.:
Symmetric Informationally Complete Quantum Measurements. Journal of Mathematical
Physics, vol. \textbf{45}(6), 1–10 (2004)

\bibitem {sad}  Sadri, V.  Rahimlou, Gh. Ahmadi R. and  Zarghami Farfar, R.:
 Generalized Fusion Frames in Hilbert Spaces.
Submited (2018)

\bibitem{st}  Strohmer, T.  Heath Jr, R.:
Grassmanian frames with applications to coding and communications.
 Appl. Comput. Harmon. Anal. \textbf{14} , 257–275 (2003)

\bibitem {sun}  Sun, W.:
 G-Frames and G-Riesz bases.
 J. Math. Anal. Appl.326 , 437-452 (2006)

\bibitem {zz}  Zhou Y. and  Zhu, Y.
 K-g-frames and dual g-frames for closed subspaces.
Acta Math. Sinica (Chin. Ser.)  \textbf{56}(5), 799-806 (2013)
\end{thebibliography}
\end{document}